\documentclass[12pt]{article}
\usepackage[utf8]{inputenc}
\usepackage[english]{babel}
\usepackage[dvips]{graphicx}
\usepackage{indentfirst}
\usepackage{latexsym}
\usepackage{amsthm}
\usepackage{amsfonts}
\usepackage{amssymb}
\usepackage{amsmath}
\usepackage{alphabeta}
\usepackage{euscript,bm}
\usepackage{array}
\usepackage{epsf,wrapfig}

\usepackage{hyperref}
\hypersetup{
    colorlinks=true,
    linkcolor=blue,
    filecolor=magenta,
    urlcolor=cyan,
    citecolor=green,
}

\sloppy

\theoremstyle{plain}
\newtheorem{theorem}{Theorem}

\newtheorem{proposition}{Proposition}

\theoremstyle{definition}

\newtheorem{property}{Property}

\begin{document}

\noindent UDC 519.17

\begin{center}
{\bf Splitting a graph by a given partition of the set of vertices based on the minimum weight of the induced trees} 
\end{center}
\begin{center}
{\large V.\,A. Buslov}
\end{center}

\begin{abstract}
A method for considering a weighted directed graph with an accuracy of up to a given partition of the set of vertices is proposed. The resulting digraph (the splitting graph) does not contain arcs inside each partition element, and the arcs between the partition atoms are calculated in a special way taking into account the arcs of the original directed graph inside the atoms. This accounting is based on minimal trees defined on atoms. A study was made of what information about the original  digraph is preserved in its splitting.
\end{abstract} 

This paper is based on the works \cite{V6,V7,V8} and also uses the results of \cite{V9,V11}. The definitions and notations correspond to those adopted in them, up to the notation of the original graph and the replacement of some fonts. 

All the results used are presented in the form of a list of properties.  

\section{Designations and definitions} 
 
The set of vertices of the directed graph $G$ is denoted by ${\tt V}G$, the set of its arcs is denoted by ${\tt A}G$. 

The basic object is the weighted directed graph $\it\Psi$, the set of vertices of which for convenience is denoted separately by ${\tt N}$, $|{\tt N}|=N$; the arcs $(i,j)\in{\tt A}\it\Psi$ are assigned real weights $\psi_{ij}$.  

Since the first part of the work only deals with directed graphs, we use the term graph for them until the advent of undirected graphs. 

We study spanning subgraphs (subgraphs with a set of vertices ${\tt N}$) of the directed graph $\it\Psi$, which are entering forests. An entering forest (hereinafter, in the first part of the work, simply a forest) is an directed graph in which no more than one arc emanates from each vertex and there are no contours in it. Connected components of a forest are trees. The only vertex of a tree from which an arc does not emanate is the root (also called the drain). 
The set of roots of a forest $F$ is denoted by ${\tt K}_F$.

For a subgraph $G$ of the graph $\it\Psi$ and a set ${\tt D}\subseteq {\tt N}$, we define (outgoing) weights  
\begin{equation}
\it\Upsilon^G_{\tt D}=\sum_{\begin{smallmatrix}i\in{\tt D} \\ (i,j)\in {\tt A}G\end{smallmatrix}} \psi_{ij} \ , \ \ \it\Upsilon^G=\it\Upsilon^G_{\tt N}=\sum_{(i,j)\in {\tt A}G} \psi_{ij} \ .
\label{ves} 
\end{equation}
Note that we specifically define the weight on the subset ${\tt D}$ of the vertex set ${\tt N}$ for directed graphs so that it also takes into account arcs whose entries do not belong to this subset (their outcomes do). Such a weight is natural for entering forests. For outgoing forests, it is the other way around: when determining the weight, in addition to the internal arcs of the subset ${\tt D}$, one must also take into account arcs whose outcomes do not belong to the set ${\tt D}$ (the entries do). The reason is that in an entering forest, no more than one arc comes from any vertex. Therefore, in entering forests, it is the arcs' outputs that should be monitored, whereas in outgoing ("classical" \ directed) forests, where no more than one arc enters each vertex, the arcs' inputs become important. With this type of input weight, there is a property of its additivity for an arbitrary directed graph $H$: 

\begin{equation}
\it\Upsilon^H_{\tt D\cup\tt D'}=\it\Upsilon^H_{\tt D}+ \it\Upsilon^H_{\tt D'}, \ {\tt D}\cap{\tt D'}=\emptyset, \ \ {\tt D},{\tt D'}\subset {\tt V}H\ .  
\end{equation}

${\cal F}^k$ is a set of spanning forests consisting of $k$ trees.
The minimum weight of $k$-component forests is denoted by 
$$\varphi^k=\min_{F\in{\cal
F}^k}\it\Upsilon^F .$$
If ${\cal F}^k=\emptyset$, we set $\varphi^k=\infty$. In particular, $\varphi^0=\infty$.

$F\in\tilde{\cal F}^k$ means that $F\in{\cal F}^k$ and $\it\Upsilon^F=\varphi^k$. We will agree to call such forests minimal. 

$H=G|_{\tt D}$ --- a subgraph of $G$ induced by the set ${\tt D}$, that is, ${\tt V}H={\tt D}$ and ${\tt A}H$ consists of all arcs of $G$, both ends of which belong to the set ${\tt D}$. $G|_{\tt D}$ is also called the restriction of $G$ to the set ${\tt D}$;

$G^F_{\uparrow{\tt D}}$ --- a graph obtained from $G$ by replacing the arcs emanating from the vertices of the set ${\tt D}$ with arcs emanating from the same vertices in the graph $F$;

If there is an arc whose outgoing point belongs to the set ${\tt D}$, but whose incoming point does not, then we say that the arc comes from the set ${\tt D}$. Similarly, if there is an arc whose incoming point belongs to ${\tt D}$, but whose outgoing point does not, then we say that the arc comes into ${\tt D}$. The outgoing neighborhood ${\tt N}^{out}_{\tt D}(F)$ of the set ${\tt D}$ is the set of entries of arcs outgoing in the graph $F$ from the set ${\tt D}$; the incoming neighborhood ${\tt N}^{in}_{\tt D}(F)$ is defined similarly.

For any subset ${\tt D}\subset {\tt N}$, its complement $\overline{\tt D}={\tt N}\setminus {\tt D}$.

A family \footnote{We use the term family along with the term set to avoid saying "set of sets".} $\mathfrak B $ of nonempty sets ${\tt B}_i$ is called a partition of ${\tt N}$ if ${\tt B}_i\cap{\tt B}_j=\emptyset$ for $i\neq j$, and ${\tt N}=\underset{i}{\cup}{\tt B}_i$.

Let $\mathfrak{B}$ be a family of subsets of ${\tt N}$. The family $\mathfrak A$ consisting of all possible complements and intersections of these subsets is called the algebra {\it generated} by the family $\mathfrak{B}$.

A non-empty set ${\tt A}$ of an algebra $\mathfrak{A}$ is called an {\it atom} if for any element ${\tt B}$ of $\mathfrak{A}$ either ${\tt A}\cap{\tt B}=\emptyset$ or ${\tt A}\cap{\tt B}={\tt A}$.

$\mathfrak{B}_k= \{ {\tt V}T^F_i | i\in{\tt K}_F,\ F\in\tilde{\cal F}^k \}$ --- the family of vertex sets of trees of $k$-component spanning minimal forests; $\mathfrak{A}_k$ --- the algebra of subsets generated by this family; $\aleph_k$ --- the family of atoms of this algebra. An atom ${\tt Z}\in\aleph_k$ is called labeled if it contains the root of at least one forest $F\in\tilde{\cal F}^k$. The family of labeled atoms is denoted by $\aleph_k^\bullet$. The remaining atoms are unlabeled, their family is denoted by $\aleph_k^\circ=\aleph_k\setminus \aleph_k^\bullet$.

\subsection{Trees on subsets}

In \cite{V8}, for any subset ${\tt D}$ of the set of all vertices ${\tt N}$, a set of special tree-like minima is defined:

\begin{equation}
 \lambda_{\tt D}^{\bullet q}=\min_{T\in {\cal T}^{\bullet q}_{\tt D}}\it\Upsilon^T, \  \lambda_{\tt D}^\bullet= \min_{q\in{\tt D}} \lambda_{\tt D}^{\bullet q},  
\label{bt}
\end{equation} 
where ${\cal T}^{\bullet q}_{\tt D}$ is a set of trees with vertex set ${\tt D}$ and root at vertex $q\in {\tt D}$.

Let us also select subsets of trees $\tilde{\cal T}^{\bullet q}$ on which the corresponding minima are achieved: $T\in \tilde{\cal T}^{\bullet q}_{\tt D}$ $\Leftrightarrow$ $T\in {\cal T}^{\bullet q}_{\tt D}$ and $\it\Upsilon^T=\lambda_{\tt D}^{\bullet q}$. We also introduce a set ${\cal T}^\bullet_{\tt D}$ of trees with a vertex set ${\tt D}$.

Similarly, we select from it a subset $\tilde{\cal T}^\bullet_{\tt D}$ according to the rule: $T\in \tilde{\cal T}^{\bullet }_{\tt D}$ $\Leftrightarrow$ $T\in {\cal T}^{\bullet }_{\tt D}$ and $\it\Upsilon^T=\lambda_{\tt D}^{\bullet }$. Naturally done
\begin{equation}
\lambda_{\tt D}^\bullet=\min_{T\in {\cal T}^{\bullet}_{\tt D}}\it\Upsilon^T \ , \ \ {\cal T}^\bullet_{\tt D}= 
\underset{q\in{\tt D}}{\cup}{\cal T}^{\bullet q}_{\tt D} \ . 
\label{Taus}
\end{equation}

Now let arcs emanate from all vertices of the set $\tt D$ and they form a special tree. In \cite{V11} the set of trees ${\cal T}^\circ_{\tt D}$ is introduced, according to the rule: $T\in{\cal T}^\circ_{\tt D}$ means that
$|{\tt V}T|=|{\tt D}|+1$ and $T|_{\tt D}\in{\cal T}^\bullet_{\tt D}$. Thus, in the tree $T$, exactly one arc comes from the set ${\tt D}$ itself, and it comes from the root of the generated tree $T|_{\tt D}$.

Note that for different trees $T$ and $T'$ from ${\cal T}^\circ_{\tt D}$, the sets of their vertices, generally speaking, do not coincide, since the root of the tree can be any vertex from $\overline{\tt D}$.

Let's enter the weight
\begin{equation}
\lambda_{\tt D}^\circ =\min_{T\in {\cal T}_{\tt D}^\circ}\it\Upsilon^{T}   , 
\label{c}
\end{equation} 
and also the subset of $\tilde{\cal T}^\circ_{\tt D}$ trees on which this minimum is achieved: $T\in\tilde{\cal T}^\circ_{\tt D}$ means that $T\in{\cal T}^\circ_{\tt D}$ and $\it\Upsilon^T=\lambda^\circ_{\tt D}$.

\subsection{Outgoing restriction}

We will also use another type of induced subgraphs \cite{V12}. By $G|_{\uparrow{\tt S}}$ we mean a graph $H$ whose arc set is all arcs emanating in $G$ from the vertices of ${\tt S}$ (and only these arcs), and whose vertex set is ${\tt S}$ supplemented by the entries of arcs emanating from ${\tt S}$ in $G$: ${\tt V}H={\tt S}\cup{\tt N}^{out}_{\tt S}(G)$. We will call such a graph an {\it outgoing restriction} of the graph $G$ onto the set ${\tt S}$. In this case, the arcs of the graph $G$ that come out from the vertices of the set ${\tt N}^{out}_{\tt S}(G)$ do not fall into the corresponding outgoing restriction even if their entries belong to the set ${\tt S}\cup{\tt N}^{out}_{\tt S}(G)$.

Note that since in any forest $F$ there is at most one arc emanating from any vertex, if $F|_{\tt S}$ is a tree and ${\tt K}_F\cap{\tt S}=\emptyset$, then $F|_{\uparrow{\tt S}}\in{\cal T}^\circ_{\tt S}$.

\subsection{Properties used} 

First of all, we note the following property of the weights of trees in minimal forests.

\begin{property}\cite[Proposition 2]{V8} {\it 
Let $F\in\tilde{\cal F}^k$, $k=1,2,\cdots,N$, and ${\tt D}$ be the set of vertices of any of its trees, and vertex $q\in {\tt D}$ be the root of this tree, then
\begin{equation}
\lambda_{\tt D}^\bullet = \lambda_{\tt D}^{\bullet q} =\it\Upsilon^F_{\tt D}  . 
\label{mlu}
\end{equation}
}
\end{property}
The sets $\tilde{\cal T}_{\tt D}^\circ$ and $\tilde{\cal T}_{\tt D}^{\bullet}$ are the main objects of this paper. 
For the corresponding weights on an arbitrary subset ${\tt D}\subset{\tt N}$ the following relation holds.

\begin{property}\cite[Proposition 1]{V11}{\it Let ${\tt D}\subsetneq {\tt N}$ and ${\cal T}^\circ_{\tt D}\neq\emptyset$, then  
\begin{equation}
\lambda_{\tt D}^\circ =\min_{q\in {\tt D} }\left( \lambda_{\tt D}^{\bullet q} + \min_{r\notin{\tt D}}\psi_{qr}\right) . 
\label{lo}
\end{equation}}
 \end{property}
 
The calculation of the quantities $\lambda_{\tt D}^\circ$, $\lambda_{\tt D}^{\bullet}$ and $\lambda_{\tt D}^{\bullet q}$, as well as the construction of the corresponding minimal trees from $\tilde{\cal T}_{\tt D}^\circ$, $\tilde{\cal T}_{\tt D}^{\bullet}$ and $\tilde{\cal T}_{\tt D}^{\bullet q}$ is carried out using efficient algorithms \cite{V8,V10,V11}.

We will use one simple property of the arc replacement operation \cite[Corollaries 1,2 from Lemma 1]{V6}, which we formulate more generally. 

\begin{property}{\it Let $F$ and $G$ be two forests and ${\tt V}G\subseteq{\tt V}F$, and the set ${\tt D}\subseteq {\tt V}F\cap{\tt V}G$. Then the graph $F^G_{\uparrow{\tt D}}$ is a forest if any of the following holds: 

1) ${\tt N}^{in}_{\tt D}(F)=\emptyset$;  

2) ${\tt N}^{out}_{\tt D}(G)=\emptyset$. 
}
\end{property}

We will assume that the original graph $\it\Psi$ is sufficiently dense, in the sense that there is at least one spanning tree, i.e. the set ${\cal F}^1$ of spanning forests consisting of one tree is not empty. Then the sets ${\cal F}^k$, $k\in \{1, 2, \ldots, N\}$ are also not empty.

\section{Splitting of the digraph} 

Let there be some partition $\aleph$, the elements of which will subsequently turn out to be the vertices of a graph splitting. It is necessary to determine the principle by which the graph splitting is created. It is also necessary to find out what properties the partition $\aleph$ must have in relation to the original directed graph $\it\Psi$, so that this graph splitting can be defined at all. It is also necessary to define the connections between the elements of the partition $\aleph$, the role of which is played by new arcs. The weights of these arcs are also subject to definition and they are responsible for the strength of the connections between the elements of the partition $\aleph$. 
 
\subsection{Trees of type ${\cal T}_{\tt XY}$}

For further constructions, it is natural to split the set of trees ${\cal T}_{\tt X}^\circ$ into subsets in which the root of the tree belongs to different sets of the partition $\aleph$. Note that for $T\in{\cal T}_{\tt X}^\circ$, the set $({\tt V}T\setminus {\tt X})$ consists of one element, namely the root of the tree $T$, which obviously does not belong to the set ${\tt X}$. We introduce the sets ${\cal T}_{\tt XY}$ according to the rule: $T\in{\cal T}_{\tt XY}$ $\Leftrightarrow$ $T\in{\cal T}_{\tt X}^\circ$ and $({\tt V}T\setminus {\tt X})\cap{\tt Y}\subset {\tt Y}$. Precisely this means that in the tree $T$ there is a unique arc $(x,y)$ such that $x\in{\tt X}$ (this vertex is the root of the tree $T|_{\tt X}$) and $y\in{\tt Y}$.

The set ${\cal T}_{\tt X}^\circ$ is represented as a disjoint union  

\begin{equation*}
{\cal T}_{\tt X}^\circ=\underset{{\tt Y}\in\aleph\setminus \{ {\tt X}\}}\cup {\cal T}_{\tt XY} \ . 
\end{equation*}
Naturally, minimal trees are introduced: $T \in \tilde{\cal T}_{\tt XY} $ $\Leftrightarrow$ $T\in{\cal T}_{\tt XY}$ and $\it\Upsilon^T=\underset{T'\in {\cal T}_{\tt XY}}\min \it\Upsilon^{T'}$. The corresponding minimal weight is denoted by $\lambda_{\tt XY}$. According to Property 2 (\ref{lo}), we can write

\begin{equation}
\lambda_{\tt X}^\circ=\min_{{\tt Y}\in\aleph\setminus \{ {\tt X}\}} \lambda_{\tt XY} \ , \ \  \lambda_{\tt XY} = \underset{T\in {\cal T}_{\tt XY}}\min \it\Upsilon^{T} =\min_{q\in {\tt X} }\left(\lambda_{\tt X}^{\bullet q} + \min_{r\in{\tt Y}}\psi_{qr}\right) \ . 
\label{lxy}
\end{equation} 

If, in addition to the graph $\it\Psi$, some other graph $\it\Psi'$ with the same set of nodes is used (for example, some spanning subgraph of the graph $\it\Psi$, in particular, some spanning forest $F$), then we specify it as an argument: ${\cal T}_{\tt XY}(\it\Psi')$. For the graph $\it\Psi$ itself, we omit the reference to it: for example, ${\cal T}_{\tt XY}= {\cal T}_{\tt XY}(\it\Psi)$. The same applies to the weights of sets: say $\lambda_{\tt X}^\bullet (\it\Psi')$ and $\lambda_{\tt X}^\bullet=\lambda_{\tt X}^\bullet (\it\Psi)$.  

\subsection{Digraph splitting based on the tree minimality principle}

Let $\it\Psi$ be an directed graph on the set of vertices {\tt N} and  $\aleph$ be some partition  of the set ${\tt N}$.
We will say that the directed graph $\it\Psi$ is {\it tree-divisible by the partition} $\aleph$, and the partition itself with respect to the directed graph $\it\Psi$ is called a {\it tree-division} if for any ${\tt X}\in\aleph$, the set ${\cal T}_{\tt X}^\bullet\neq \emptyset$.  Only for a graph $\it\Psi$ divisible by partition $\aleph$ we define {\it splitting digraph} by partition $\aleph$ (in what follows, simply splitting digraph, since the same partition appears everywhere in what follows). We will denote it by $\it\Psi^\aleph= \it\Psi|\aleph$. For convenience, we will use both the left and right parts of this expression as notation. The set of vertices of the partition graph $\it\Psi|\aleph$ are the elements of the partition $\aleph$: ${\tt V}\it\Psi^\aleph= \aleph$. An arc $(\tt X,Y)$, $\{{\tt X,Y}\}\subset \aleph$, belongs to the set of arcs of the splitting graph $\it\Psi|\aleph$ if and only if ${\cal T}_{\tt XY}\neq \emptyset$: ${\tt A}\it\Psi^\aleph=\{ ({\tt X,Y})| {\cal T}_{\tt XY}\neq \emptyset \}$. \footnote{We define tree divisibility and the splitting digraph in such a way that it is natural for the entering forests and trees.} 
 
Note that if ${\cal T}_{\tt XY}\neq \emptyset$, then automatically ${\cal T}^\circ_{\tt X}\neq \emptyset$, and also that ${\cal T}^\bullet_{\tt X}\neq \emptyset$. 

We will call this method of graph splitting tree splitting.

The tree divisibility of the directed graph $\it\Psi$ does not mean that its splitting $\it\Psi^\aleph$ has, say, a spanning tree. Moreover, a limiting situation is possible when the set of forests ${\cal F}^k(\it\Psi|\aleph)$ is not empty only for $k=|\aleph|$ (the set ${\cal F}^{|\aleph|}(\it\Psi|\aleph)\neq \emptyset$ by the definition of divisibility), although the direct graph $\it\Psi$ itself has a spanning tree. 
Moreover, it is obvious that if we can divide some spanning subgraph of the graph $\it\Psi$, then we also can divide the graph $\it\Psi$ itself.

If the directed graph $\it\Psi$ is weighted, then the weights of the arcs of the splitting digraph $\it\Psi^\aleph$ will be determined based on the minimality principle. Namely, the weight of any of its arcs $(\tt X,Y)$ is assumed to be equal to 

\begin{equation}
\psi_{\tt XY}^\aleph=\lambda_{\tt XY} -\lambda_{\tt X}^\bullet \ , \ \ {\cal T}_{\tt XY}\neq\emptyset \ .
\label{wei}
\end{equation}
If the set ${\cal T}_{\tt XY}=\emptyset$, then the arc ({\tt X,Y}) is not in the splitting  digraph.

For an unweighted directed graph $\lambda_{\tt XY}=|{\tt X}|$ and $\lambda_{\tt X}^\bullet= |{\tt X}|-1$. Therefore, the weight of any arc according to formula (\ref{wei}) is automatically equal to one.
 
The idea reflected in the introduced weights (\ref{wei}) is the following. The weight of the arc $(\tt X,Y)$ shows how much the weight of the minimal tree $T\in\tilde{\cal T}^\bullet_{\tt X}$ on the vertex set ${\tt X}$ changes when moving to a tree from $\tilde{\cal T}_{\tt XY}$, which has one more arc and it enters ${\tt Y}$.  Taking into account (\ref{lxy}), the expression (\ref{wei}) takes the form

\begin{equation}
\psi_{\tt XY}^\aleph =\min_{q\in {\tt X} }\left(\lambda_{\tt X}^{\bullet q} + \min_{r\in{\tt Y}}\psi_{qr}\right)-\lambda_{\tt X}^\bullet \ . 
\label{vxy}
\end{equation} 

In fact, from the moment of defining the weights, we can forget about the structure of the distribution of arcs inside the partition elements and consider the graph $\it\Psi|\aleph$ as an ordinary weighted digraph, which is what it essentially is. Its vertices are simply the partition $\aleph$ elements.

\begin{figure}[h]
\unitlength=0.7mm
\begin{center}
\begin{picture}(195,50)

\put(24,45){$\it\Psi$}

\put(28,21){\scriptsize $ u$}
\put(27,2){\scriptsize $ v$}
\put(3,2){\scriptsize $ y$}
\put(48,20){\scriptsize $ r$}
\put(47,2){\scriptsize $ q$}
\put(49,38){\scriptsize $ p$}
\put(2,21){\scriptsize $ x$}
\put(2,38){\scriptsize $ s$}
\put(23,38){\scriptsize $ t$}

\put(8,6){\vector(1,0){17}}
\put(25,5){\vector(-1,0){17}}
\put(25,36){\vector(-1,0){17}}
\put(7,8){\vector(0,1){12}}
\put(6,20){\vector(0,-1){12}}
\put(26,20){\vector(0,-1){12}}
\put(27,8){\vector(0,1){12}}
\put(7,35){\vector(0,-1){12}}
\put(8,21){\vector(1,0){17}}
\put(6,23){\vector(0,1){12}}
\put(45,20){\vector(-4,-3){17}}
\put(28,36){\vector(1,0){17}}
\put(27,35){\vector(0,-1){12}}
\put(46,35){\vector(0,-1){12}}
\put(46,8){\vector(0,1){12}}
\put(25,36){\vector(-4,-3){17}}
\put(26,23){\vector(0,1){12}}
\put(25,20){\vector(-4,-3){17}}
\put(47,20){\vector(0,-1){12}}
\put(47,23){\vector(0,1){12}}

\put(5,5){$\bullet$}
\put(5,20){$\bullet$}
\put(5,35){$\bullet$}
\put(25,5){$\bullet$}
\put(25,20){$\bullet$}
\put(25,35){$\bullet$}
\put(45,5){$\bullet$}
\put(45,20){$\bullet$}
\put(45,35){$\bullet$}

\put(17,1){\scriptsize $7$}
\put(17,7){\scriptsize $5$}
\put(14,14){\scriptsize $6$}
\put(15,22){\scriptsize $1$}
\put(14,30){\scriptsize $1$}
\put(14,37){\scriptsize $3$}
\put(7,27){\scriptsize $1$}
\put(8,13){\scriptsize $1$}
\put(2,13){\scriptsize $4$}
\put(23,13){\scriptsize $7$}
\put(2,28){\scriptsize $2$}
\put(35,31){\scriptsize $1$}
\put(34,15){\scriptsize $1$}
\put(42,12){\scriptsize $2$}
\put(42,27){\scriptsize $2$}
\put(22,27){\scriptsize $2$}
\put(48,12){\scriptsize $1$}
\put(48,27){\scriptsize $2$} 
\put(28,27){\scriptsize $3$}
\put(28,13){\scriptsize $4$}

\put(92,45){$\it\Psi|\aleph$}

\put(86,6){\oval(24,12)}
\put(86,29){\oval(27,25)}
\put(116,20){\oval(10,41)}

\put(78,13){\scriptsize $3$}
\put(93,13){\scriptsize $5$}
\put(105,23){\scriptsize $2$}
\put(104,14){\scriptsize $2$}

\put(114,19){\vector(-4,-3){17}}
\put(97,27){\vector(1,0){17}}
\put(92,20){\vector(0,-1){12}}
\put(81,8){\vector(0,1){12}} 

\put(85,27){${\tt X}$}
\put(85,4){${\tt Y}$}
\put(115,22){${\tt Z}$}

\put(154,45){$\{ T\}=\tilde{\cal F}^1$}

\put(146,8){\vector(0,1){12}}
\put(146,35){\vector(0,-1){12}}
\put(168,36){\vector(1,0){17}}
\put(148,21){\vector(1,0){17}}
\put(166,23){\vector(0,1){12}}
\put(185,20){\vector(-4,-3){17}}
\put(186,8){\vector(0,1){12}}
\put(186,35){\vector(0,-1){12}}

\put(145,5){$\bullet$}
\put(145,20){$\bullet$}
\put(145,35){$\bullet$}
\put(165,5){$\bullet$}
\put(165,20){$\bullet$}
\put(165,35){$\bullet$}
\put(185,5){$\bullet$}
\put(185,20){$\bullet$}
\put(185,35){$\bullet$}
\put(156,6){\oval(24,12)}
\put(156,29){\oval(27,25)}
\put(186,21){\oval(10,41)}

\put(175,31){\scriptsize $1$}
\put(155,22){\scriptsize $1$}
\put(163,27){\scriptsize $2$}
\put(147,27){\scriptsize $1$}
\put(147,13){\scriptsize $1$}
\put(174,15){\scriptsize $1$}
\put(183,12){\scriptsize $2$}
\put(183,27){\scriptsize $2$}

\end{picture} 
\caption{\small A minimum weight spanning tree $T$ is not divisible by a partition $\aleph$, since its induced subgraph $T|_{\tt Y}$ is not a tree, and therefore $\tilde{\cal T}^\bullet_{\tt Y}(T)=\emptyset$. On calculating the weights of the partition directed graph ${\it\Psi}^\aleph$, for example: $\lambda_{\tt X}^\bullet={\it\psi}_{tx}+{\it\psi}_{sx}+{\it\psi}_{xu}=3$, $\lambda_{\tt XY}={\it\psi}_{ut}+{\it\psi}_{tx} +{\it\psi}_{sx}+{\it\psi}_{xy}=8$, whence ${\it\psi^\aleph_{\tt XY}=\lambda_{\tt XY}}-\lambda_{\tt X}^\bullet=5$. }
\label{p1}
\end{center}
\end{figure}
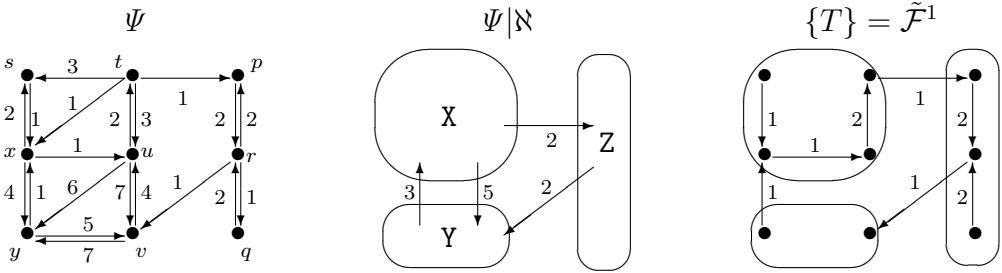

\subsection{Tree-like divisibility of forests and their splitting}

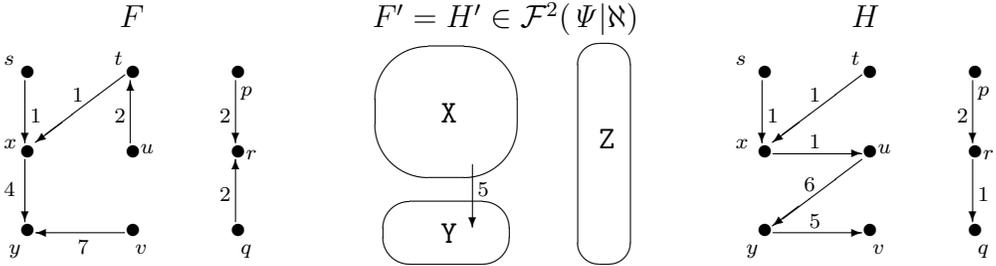
\begin{figure}[h]
\unitlength=0.7mm
\begin{center}
\begin{picture}(195,50)

\put(24,45){$F$}

\put(28,21){\scriptsize $ u$}
\put(27,2){\scriptsize $ v$}
\put(3,2){\scriptsize $ y$}
\put(48,20){\scriptsize $ r$}
\put(47,2){\scriptsize $ q$}
\put(47,32){\scriptsize $ p$}
\put(2,22){\scriptsize $ x$}
\put(2,38){\scriptsize $ s$}
\put(23,38){\scriptsize $ t$}

\put(25,6){\vector(-1,0){17}}
\put(6,20){\vector(0,-1){12}}
\put(6,35){\vector(0,-1){12}}
\put(46,35){\vector(0,-1){12}}
\put(46,8){\vector(0,1){12}}
\put(26,23){\vector(0,1){12}}
\put(25,36){\vector(-4,-3){17}}

\put(5,5){$\bullet$}
\put(5,20){$\bullet$}
\put(5,35){$\bullet$}
\put(25,5){$\bullet$}
\put(25,20){$\bullet$}
\put(25,35){$\bullet$}
\put(45,5){$\bullet$}
\put(45,20){$\bullet$}
\put(45,35){$\bullet$}

\put(16,2){\scriptsize $7$}
\put(2,13){\scriptsize $4$}
\put(15,31){\scriptsize $1$}
\put(7,27){\scriptsize $1$}
\put(43,12){\scriptsize $2$}
\put(43,27){\scriptsize $2$}
\put(23,27){\scriptsize $2$}

\put(72,45){$F'=H'\in{\cal F}^2(\it\Psi|\aleph)$}

\put(86,6){\oval(24,12)}
\put(86,29){\oval(27,25)}
\put(116,21){\oval(10,42)}

\put(92,13){\scriptsize $5$}
\put(91,19){\vector(0,-1){12}} 

\put(85,27){${\tt X}$}
\put(85,4){${\tt Y}$}
\put(115,22){${\tt Z}$}

\put(163,45){$H$}

\put(168,21){\scriptsize $ u$}
\put(167,2){\scriptsize $ v$}
\put(143,2){\scriptsize $ y$}
\put(188,20){\scriptsize $ r$}
\put(187,2){\scriptsize $ q$}
\put(187,32){\scriptsize $ p$}
\put(141,22){\scriptsize $ x$}
\put(141,38){\scriptsize $ s$}
\put(163,38){\scriptsize $ t$}

\put(148,6){\vector(1,0){17}}
\put(146,35){\vector(0,-1){12}}
\put(148,21){\vector(1,0){17}}
\put(186,35){\vector(0,-1){12}}
\put(186,20){\vector(0,-1){12}}
\put(165,20){\vector(-4,-3){17}}
\put(165,36){\vector(-4,-3){17}}

\put(145,5){$\bullet$}
\put(145,20){$\bullet$}
\put(145,35){$\bullet$}
\put(165,5){$\bullet$}
\put(165,20){$\bullet$}
\put(165,35){$\bullet$}
\put(185,5){$\bullet$}
\put(185,20){$\bullet$}
\put(185,35){$\bullet$}

\put(155,7){\scriptsize $5$}
\put(154,14){\scriptsize $6$}
\put(155,22){\scriptsize $1$}
\put(155,31){\scriptsize $1$}
\put(147,27){\scriptsize $1$}
\put(187,12){\scriptsize $1$}
\put(183,27){\scriptsize $2$}

\end{picture} 
\caption{\small  Spanning forests $F$ and $H$ from ${\cal F}^2$ have the same representative $F'=H'$ in ${\cal F}^2(\it\Psi^\aleph)$. Their own partitions $F^\aleph$ and $H^\aleph$ have the same unique arc $({\tt X,Y})$ as $F'$, but with different weights: $f^\aleph_{\tt XY}={\it\psi}_{xy}=4$, $h^\aleph_{\tt XY}={\it\psi}_{uy}=6$. In this case, $f'_{\tt XY}= h'_{\tt XY}={\it\psi}^\aleph_{\tt XY}=5$.}
\label{p2}
\end{center}
\end{figure}

If $F$ is an arbitrary spanning forest of the original graph $\it\Psi$, then it is, generally speaking, not divisible. Let $F$ be such that $F|_{\tt X}$ is a tree, for all ${\tt X}\in\aleph$. That is, this forest is tree-divisible by partition $\aleph$ in the sense of the definition introduced above. The partition $\aleph$ itself with respect to the forest $F$ turns out to be a tree partition \cite{V9}. Only for tree-divisible forests can one define their own splitting $F|\aleph$. The tree divisibility itself imposes restrictions on the number of trees in the spanning forest.

\begin{proposition}
 Let a spanning forest $F$ be tree-divisible by a partition $\aleph$, then any set ${\tt X}\in\aleph$ contains at most one root of the forest $F$.
\end{proposition}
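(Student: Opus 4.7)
The plan is to proceed by contradiction, relying only on the definitions of root, of tree in the entering-forest sense, and of tree-divisibility. Suppose some ${\tt X}\in\aleph$ contains two distinct roots $r_1,r_2\in{\tt K}_F$ of $F$. By the definition of the root of an entering forest, no arc of $F$ emanates from $r_1$ or from $r_2$.

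The next step is to pass to the restriction $F|_{\tt X}$. Since ${\tt A}(F|_{\tt X})$ consists exactly of those arcs of $F$ whose both endpoints lie in ${\tt X}$, and since no arcs of $F$ emanate from $r_1$ or $r_2$ to begin with, in particular no arcs of $F|_{\tt X}$ emanate from $r_1$ or $r_2$. Hence both $r_1$ and $r_2$ are roots of $F|_{\tt X}$.

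Finally, I would invoke the hypothesis that $F$ is tree-divisible by $\aleph$: by definition, this means $F|_{\tt X}\in {\cal T}^\bullet_{\tt X}$, i.e. $F|_{\tt X}$ is a (connected) tree in the entering sense, which has a \emph{unique} root. The existence of two distinct roots $r_1\neq r_2$ is a contradiction, so $|{\tt K}_F\cap{\tt X}|\le 1$.

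I do not anticipate a substantial obstacle: the statement is essentially an immediate consequence of the uniqueness of the root in a tree combined with the monotonicity property that restricting a forest to a vertex subset cannot introduce outgoing arcs at a vertex that had none. The only thing worth stating carefully is that $F|_{\tt X}$ being a tree forces exactly one vertex of ${\tt X}$ to have no outgoing arc inside ${\tt X}$, which is precisely what fails if two roots of $F$ are present in ${\tt X}$.
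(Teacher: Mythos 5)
Your proof is correct and follows essentially the same route as the paper's own (one-line) argument: two roots of $F$ inside ${\tt X}$ give two vertices of $F|_{\tt X}$ with no outgoing arc, so $F|_{\tt X}$ cannot be a tree, contradicting tree-divisibility. You have merely spelled out the details that the paper leaves implicit.
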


\begin{proof}
Indeed, if at least one set ${\tt X}\in\aleph$ contains more than one root of the forest $F$, then the graph $F|_{\tt X}$ is not a tree. 
\end{proof}

\begin{proposition}
Let $F\in{\cal F}^k$ and it is tree-divisible by partition $\aleph$. Then $k\leq |\aleph|$.
\end{proposition}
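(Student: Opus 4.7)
The plan is to reduce this immediately to the preceding Proposition~1, which states that any set ${\tt X}\in\aleph$ contains at most one root of $F$ whenever $F$ is tree-divisible by $\aleph$. Since $F\in{\cal F}^k$ has exactly $|{\tt K}_F|=k$ roots, and $\aleph$ partitions ${\tt N}$, each root of $F$ belongs to exactly one element of $\aleph$. The count of roots is therefore bounded by the number of partition elements that can host one, which is at most $|\aleph|$.

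More precisely, I would define the map $\rho\colon{\tt K}_F\to\aleph$ sending each root $q$ to the unique ${\tt X}\in\aleph$ with $q\in{\tt X}$; this is well-defined because $\aleph$ is a partition. Proposition~1 tells us that $\rho$ is injective (no fiber has more than one element). Hence $k=|{\tt K}_F|=|\rho({\tt K}_F)|\leq|\aleph|$.

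There is no real obstacle here: the content has been absorbed into Proposition~1. The only thing worth double-checking is that the hypothesis of Proposition~1 applies verbatim, i.e.\ that ``tree-divisible by $\aleph$'' is indeed the hypothesis used there, which it is by the definition introduced just before the two propositions. I would therefore keep the proof to two sentences, explicitly invoking Proposition~1 and noting that the roots of $F$, distributed among the elements of $\aleph$, number at most $|\aleph|$.
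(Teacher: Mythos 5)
Your proof is correct and is essentially the paper's own argument: both reduce to Proposition~1 by observing that the $k$ roots of $F$ are distributed among the elements of $\aleph$ with at most one per element. The only cosmetic difference is that you phrase it directly via an injective map ${\tt K}_F\to\aleph$, while the paper argues by contradiction (pigeonhole).
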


\begin{proof}
By contradiction. If $k>|\aleph|$, then there exists at least one set ${\tt X}\in\aleph$ that contains more than one root of the forest $F$, which contradicts Proposition 1.
\end{proof} 

For a tree-divisible forest $F\in{\cal F}^k$, one can obtain its splitting $F|\aleph$ independently of the original graph $\it\Psi$ (from $\it\Psi$ only the weights of its arcs are present in the forest $F$), but simply by the splitting procedure described above.

With respect to a tree-divisible forest $F$, the partition $\aleph$ can be represented as a disjoint union $\aleph^\bullet_F\cup\aleph^\circ_F$, where ${\tt Z}\in\aleph_F^\bullet$ means that ${\tt Z}$ contains a root of forest $F$ (exactly one), and ${\tt X}\in\aleph^\circ_F$ means that ${\tt X}$ does not contain roots of forest $F$. Since any set ${\tt Y}\in\aleph$ contains at most one root of the forest $F$, then $|\aleph_F^\bullet|=|{\cal K}_F|$. Accordingly, $|{\tt A}F|=|\aleph|- |\aleph^\bullet_F|= |\aleph|- |{\cal K}_F| =|\aleph^\circ_F|$. In particular, ${\tt K}_{F^\aleph}=\aleph^\bullet_F$. Therefore, it is true

\begin{proposition}
Let $F\in{\cal F}^k$ be tree-divisible by partition $\aleph$. Then graph $F^\aleph$ consists of $k$ trees and has $|\aleph|-k$ arcs.
\end{proposition}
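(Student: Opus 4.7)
The plan is to split the argument into two pieces: a clean count of arcs of $F^\aleph$, confirming $|{\tt A}F^\aleph|=|\aleph|-k$, and a verification that $F^\aleph$ is acyclic, after which the count of $k$ trees follows from the standard identity ``vertices $-$ arcs $=$ number of components'' that is valid for entering forests (in which every vertex has outdegree at most one).

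For the arc count, I would fix ${\tt X}\in\aleph$ and use the hypothesis that $F|_{\tt X}$ is a tree, which provides exactly one root $r_{\tt X}\in{\tt X}$ for this local subtree. If ${\tt X}\in\aleph^\bullet_F$, then the unique $F$-root lying in ${\tt X}$ must coincide with $r_{\tt X}$ (every other vertex of ${\tt X}$ emits an arc of $F$ staying inside ${\tt X}$), so no arc of $F$ leaves ${\tt X}$ and the vertex ${\tt X}$ has outdegree $0$ in $F^\aleph$. If instead ${\tt X}\in\aleph^\circ_F$, then $r_{\tt X}$ is not a root of $F$, so its unique outgoing $F$-arc enters some element of $\aleph\setminus\{{\tt X}\}$, producing exactly one arc of $F^\aleph$ starting at ${\tt X}$. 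Using Proposition~1 to deduce $|\aleph^\bullet_F|=k$, this yields $|{\tt A}F^\aleph|=|\aleph^\circ_F|=|\aleph|-k$.

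For acyclicity, I would argue by lifting a hypothetical contour of $F^\aleph$ back to $F$. Suppose ${\tt X}_1\to{\tt X}_2\to\cdots\to{\tt X}_m\to{\tt X}_1$ is a contour in $F^\aleph$; by the description of ${\cal T}_{\tt XY}$, each arc $({\tt X}_i,{\tt X}_{i+1})$ arises from a unique arc $(r_{{\tt X}_i},y_i)$ of $F$ with $y_i\in{\tt X}_{i+1}$. Since $F|_{{\tt X}_{i+1}}$ is a tree with root $r_{{\tt X}_{i+1}}$, iterating outgoing $F$-arcs starting from $y_i$ stays inside ${\tt X}_{i+1}$ and eventually reaches $r_{{\tt X}_{i+1}}$, giving a directed $F$-path from $y_i$ to $r_{{\tt X}_{i+1}}$. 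Concatenating these paths with the crossing arcs $(r_{{\tt X}_i},y_i)$ around the cycle produces a contour in $F$, contradicting the fact that $F$ is a forest. Hence $F^\aleph$ has no contour, and since every vertex has outdegree at most $1$, $F^\aleph$ is itself an entering forest; its roots are precisely the $|\aleph^\bullet_F|=k$ elements of $\aleph^\bullet_F$, so it consists of exactly $k$ trees.

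The only substantive step is the lifting argument used to prove acyclicity; the arc count and the identification of the roots have already been indicated in the paragraph preceding the proposition. I expect the main obstacle to be articulating the lifting cleanly — specifically, making precise the fact that inside each ${\tt X}_{i+1}$ the subtree $F|_{{\tt X}_{i+1}}$ routes $y_i$ down to its unique local root $r_{{\tt X}_{i+1}}$, which is exactly what allows the crossing arcs to be chained together into a global contour of $F$.
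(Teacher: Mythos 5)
Your proof is correct and follows essentially the same route as the paper, which states this proposition without a separate proof, relying on the counting in the preceding paragraph (each ${\tt X}\in\aleph^\circ_F$ emits exactly one arc, each ${\tt X}\in\aleph^\bullet_F$ none, and $|\aleph^\bullet_F|=|{\tt K}_F|=k$). Your only addition is the explicit contour-lifting argument showing $F^\aleph$ is acyclic, a point the paper leaves implicit; that argument is sound and a worthwhile detail to spell out.
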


For the graph $\it\Psi|\aleph$, we can consider its spanning forests, consisting of $k\le |\aleph|$ trees, namely the set ${\cal F}^k(\it\Psi|\aleph)$ (for them, the question of divisibility does not arise at all, since they are subgraphs of an already implemented splitting, as long as ${\cal F}^k(\it\Psi|\aleph)\neq \emptyset$). At the same time, we can consider both tree-divisible forests $F$ from the set ${\cal F}^k$ (if such exist) and their splittings. The question arises: how are these forests related to each other? 
Let $F\in{\cal F}^k$ be tree-divisible by partition $\aleph$. We will call a spanning forest $F'$ of a directed graph $\it\Psi|\aleph$ a {\it representative} of the forest $F$ if ${\tt A}F'={\tt A}F^\aleph$. In this case, in particular, ${\tt K}_{F'}=\aleph^\bullet_F$, and $F'$ automatically belongs to the set ${\cal F}^k(\it\Psi|\aleph)$. The forest $F$ with respect to $F'$ turns out to be a {\it principal}.  

\begin{proposition}
Let $F\in{\cal F}^k$ be tree-divisible by partition $\aleph$. Then it has a unique representative $F'\in{\cal F}^k(\it\Psi|\aleph)$. 

\end{proposition}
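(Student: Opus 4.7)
The plan is to handle uniqueness and existence separately. Uniqueness is essentially built into the definition: any representative $F'$ must satisfy ${\tt A}F' = {\tt A}F^\aleph$ (by the definition of representative) and ${\tt V}F' = {\tt V}\it\Psi^\aleph = \aleph$ (since $F'$ is a spanning subgraph of $\it\Psi|\aleph$). Hence $F'$ is completely determined by $F$ and $\aleph$, and uniqueness follows at once.

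For existence, I would take $F'$ to be the graph on vertex set $\aleph$ with arc set ${\tt A}F^\aleph$, and then check two things: (a) every arc of $F^\aleph$ is in fact an arc of $\it\Psi^\aleph$, so that $F'$ is a legitimate spanning subgraph of $\it\Psi|\aleph$; and (b) $F'$ is a $k$-component spanning forest, so that $F'\in{\cal F}^k(\it\Psi|\aleph)$.

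For (a) I would use monotonicity in the first argument. By the definition of the splitting, any arc $({\tt X,Y})\in{\tt A}F^\aleph$ witnesses ${\cal T}_{\tt XY}(F)\neq\emptyset$. But any tree $T\in{\cal T}_{\tt XY}(F)$ uses only arcs of $F$, and hence only arcs of $\it\Psi$ (since $F$ is a spanning subgraph of $\it\Psi$); so $T\in{\cal T}_{\tt XY}(\it\Psi)$, giving ${\cal T}_{\tt XY}(\it\Psi)\neq\emptyset$, which is precisely the condition for $({\tt X,Y})\in{\tt A}\it\Psi^\aleph$. For (b), Proposition 3 already tells us that $F^\aleph$ consists of $k$ trees and has $|\aleph|-k$ arcs, so $F'$, being the same graph now viewed inside $\it\Psi|\aleph$, lies in ${\cal F}^k(\it\Psi|\aleph)$.

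The only real content is step (a); everything else is either a direct consequence of the definitions or a citation of Proposition 3. I do not anticipate a serious obstacle: the whole argument is a short bookkeeping exercise on top of the fact that the operation ${\cal T}_{\tt XY}(\,\cdot\,)$ is monotone in its argument.
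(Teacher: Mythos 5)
Your proposal is correct and follows essentially the same route as the paper: the decisive step in both is the inclusion ${\cal T}_{\tt XY}(F)\subset {\cal T}_{\tt XY}$, which guarantees that every arc of $F^\aleph$ is an arc of $\it\Psi^\aleph$, after which the forest structure of $F'$ (via the count of $|\aleph|-k$ arcs, i.e.\ Proposition 3) and the trivial uniqueness finish the argument. The only cosmetic difference is that you cite Proposition 3 where the paper re-derives the arc count inline.
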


\begin{proof}
By the definition of forest divisibility, for any ${\tt X}\in\aleph$ the graph $F|_{\tt X}$ is a tree. Therefore $\{F|_{\tt X}\}={\cal T}^\bullet_{\tt X}(F)\neq \emptyset$. 
Further, since $F$ is tree divisible, then from any set ${\tt X}\in\aleph$ there is no more than one arc emanating from it. And this arc, if it exists, enters the unique set ${\tt Y}\in\aleph\setminus\{ \tt X \}$. Thus, in this case the set ${\cal T}_{\tt XY}(F)=\{ F|_{\uparrow{\tt X} }\}\neq \emptyset$. Therefore, according to the definition of splitting, in the graph $F^\aleph$ there is an arc $({\tt X,Y})$.
Further, since ${\cal T}_{\tt XY}(F)\subset {\cal T}_{\tt XY}$, then the arc $({\tt X,Y})$ is also present in the graph $\it\Psi^\aleph$. If now in the graph $\it\Psi^\aleph$ we take a spanning subgraph $F'$ with all such arcs (and there are exactly $|\aleph|-k$ of them), then it belongs to the set ${\cal F}^k(\it\Psi|\aleph)$ and by construction ${\tt A}F'={\tt A}F^\aleph$.  
\end{proof} 

Thus, a tree-divisible forest is a forest that has a representative. In this sense, a splitting graph is a representative graph. But to call a splitting graph such, the following must hold:

\begin{proposition}
Let $F'\in{\cal F}^k(\it\Psi|\aleph)$, then it has at least one principal $F\in{\cal F}^k$.
\end{proposition}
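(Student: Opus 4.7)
The plan is to construct $F$ atom by atom, using the divisibility hypothesis inside each atom and the arcs of $F'$ to glue things together.

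\medskip

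\noindent\textbf{Construction.} For each ${\tt X}\in\aleph$ I would set
\[
T_{\tt X}=\begin{cases} \text{any tree from } {\cal T}^\bullet_{\tt X}, & {\tt X}\in{\tt K}_{F'},\\
\text{any tree from } {\cal T}_{\tt XY}, & ({\tt X},{\tt Y})\in{\tt A}F'.\end{cases}
\]
Existence of these trees is guaranteed: in the first case by tree-divisibility of $\it\Psi$ by $\aleph$, in the second case by the very definition of the splitting graph (an arc $({\tt X},{\tt Y})$ lies in $\it\Psi^\aleph$ iff ${\cal T}_{\tt XY}\neq\emptyset$). Define $F$ to be the spanning subgraph of $\it\Psi$ whose arc set is $\bigcup_{{\tt X}\in\aleph}{\tt A}T_{\tt X}$.

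\medskip

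\noindent\textbf{Verification that $F$ is a forest in ${\cal F}^k$.} First, every vertex has at most one outgoing arc in $F$: the only vertex of $T_{\tt X}$ outside ${\tt X}$ is the root of $T_{\tt X}$ (which emits nothing), and inside ${\tt X}$ each vertex emits at most one arc of $T_{\tt X}$ (exactly one if it is not the root of $T_{\tt X}|_{\tt X}$, and the special cross-atom arc if it is the root and ${\tt X}\notin{\tt K}_{F'}$). Crucially, the induced subgraph $F|_{\tt X}$ coincides with $T_{\tt X}|_{\tt X}$, because arcs contributed by $T_{{\tt X}'}$ with ${\tt X}'\neq{\tt X}$ are either internal to ${\tt X}'$ or cross from ${\tt X}'$ to another atom, hence never have both endpoints in ${\tt X}$. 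In particular each $F|_{\tt X}$ is a tree, which already establishes tree-divisibility of $F$.

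\medskip

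\noindent\textbf{Absence of contours} is the step I expect to be the only delicate one, but it follows from the forest structure of $F'$. Suppose a contour $v_1\to v_2\to\dots\to v_m\to v_1$ existed in $F$, and let ${\tt Z}_i\in\aleph$ be the atom containing $v_i$. Consecutive atoms ${\tt Z}_i,{\tt Z}_{i+1}$ are either equal (arc internal to an atom, coming from $T_{{\tt Z}_i}|_{{\tt Z}_i}$) or connected by an arc $({\tt Z}_i,{\tt Z}_{i+1})\in{\tt A}F'$ (the cross-atom arc of $T_{{\tt Z}_i}$). Collapsing consecutive repetitions, we obtain a closed walk in $F'$; since $F'$ is a forest, this walk is trivial, so all $v_i$ lie in a single atom ${\tt X}$, forcing a contour in $F|_{\tt X}=T_{\tt X}|_{\tt X}$, which is absurd. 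Hence $F$ is an entering forest, whose roots are exactly the $|{\tt K}_{F'}|=k$ roots of the trees $T_{\tt X}|_{\tt X}$ with ${\tt X}\in{\tt K}_{F'}$, so $F\in{\cal F}^k$.

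\medskip

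\noindent\textbf{$F'$ is the representative of $F$.} By construction the cross-atom arcs of $F$ are, one per atom ${\tt X}\notin{\tt K}_{F'}$, arcs from ${\tt X}$ to the atom ${\tt Y}$ with $({\tt X},{\tt Y})\in{\tt A}F'$. Therefore ${\tt A}F^\aleph={\tt A}F'$, which by definition means $F'$ is the representative and $F$ is a principal of $F'$.
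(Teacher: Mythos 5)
Your construction is exactly the paper's: pick an arbitrary tree from ${\cal T}_{\tt XY}$ for each arc $({\tt X},{\tt Y})\in{\tt A}F'$ and an arbitrary tree from ${\cal T}^\bullet_{\tt Z}$ for each root ${\tt Z}\in{\tt K}_{F'}$, and take the union of their arc sets. The only difference is that you spell out the verification that the union is a $k$-component entering forest (the no-contour argument via projecting a hypothetical contour to a closed walk in $F'$), which the paper leaves implicit with ``by construction''; your filled-in details are correct.
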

\begin{proof}
Let ${\tt (X,Y)}\in{\tt A}F'$, then $\tt (X,Y)\in{\tt A}\it\Psi^\aleph$, since $F'$ is a (spanning) subgraph of ${\it\Psi}^\aleph$. By definition, this means, in particular, that ${\cal T}_{\tt XY}\neq \emptyset$. Take any tree $T_{xy}\in{\cal T}_{\tt XY}$ rooted at some vertex $y\in{\tt Y}$ and with a single arc $(x,y)$ entering it. We will do the same with all other arcs of the forest $F'$, selecting any corresponding trees from the sets ${\cal T}_{\tt XY}$. Consider the spanning subgraph $F$ of the graph $\it\Psi$, consisting of the union of these trees, and add to it arbitrary trees $T_z\in{\cal T}^\bullet_{\tt Z}$ with roots at some vertices $z\in{\tt Z}$ for all ${\tt Z}\in{\tt K}_{F'}$:  

\begin{equation}
{\tt A}F=\underset{{\tt (X,Y)}\in{\tt A}F'}{\cup}{\tt A}T_{xy} \underset{{\tt Z}\in{\tt K}_{F'}}{\cup}{\tt A}T_z \ .
\label{ff'}
\end{equation}
This is the required forest $F\in{\cal F}^k$, since by construction ${\tt A}F^\aleph={\tt A}F'$. 
\end{proof}
Of course, the forest $F$ obtained by such a construction is, generally speaking, not unique due to the arbitrariness in the choice of trees $T_{xy}$ from the sets ${\cal T}_{\tt XY}$. Two tree-divisible forests $F$ and $H$ can have the same representative $F'=H'$. In this case, the corresponding forests of splitting $F^\aleph$ and $H^\aleph$ have the same sets of arcs ${\tt A}F^\aleph={\tt A}H^\aleph$. However, the weights of these arcs differ, as the following shows

 \begin{proposition}
Let $F$ be a tree-divisible spanning forest of $\it{\Psi}$ and $F^\aleph$ be a splitting of $F$. Then the weights of their arcs are related by
\begin{equation}
f_{\tt XY}^\aleph=\psi_{xy} \ , \ \ \  (x,y)\in{\tt A}F  , \ \  x\in{\tt X}  , \ \ y\in{\tt Y} \ . 
\end{equation}
\label{fv}
\end{proposition}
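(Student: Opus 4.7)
The plan is to compute both $\lambda_{\tt XY}(F)$ and $\lambda_{\tt X}^\bullet(F)$ directly from the structure of the forest $F$ and then subtract, using formula (\ref{wei}) applied with $F$ in place of $\it\Psi$.

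First I would use tree-divisibility to identify $\lambda_{\tt X}^\bullet(F)$. Since $F|_{\tt X}$ is a tree by hypothesis and $F$ has at most one arc emanating from each vertex, the only subgraph of $F$ whose vertex set is ${\tt X}$ that can be a tree is $F|_{\tt X}$ itself. Hence ${\cal T}^\bullet_{\tt X}(F)=\{F|_{\tt X}\}$ and $\lambda_{\tt X}^\bullet(F)=\it\Upsilon^{F|_{\tt X}}$. Let $x^*$ denote the root of this unique tree; then ${\cal T}^{\bullet q}_{\tt X}(F)=\emptyset$ for $q\neq x^*$, so $\lambda_{\tt X}^{\bullet q}(F)=\infty$ in those cases, while $\lambda_{\tt X}^{\bullet x^*}(F)=\it\Upsilon^{F|_{\tt X}}$.

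Next I would identify $x^*$ with the vertex $x$ of the arc $(x,y)\in {\tt A}F$. Every vertex $q\in{\tt X}$ with $q\neq x^*$ already spends its unique outgoing arc inside ${\tt X}$ (to form $F|_{\tt X}$), so the only arc of $F$ emanating from ${\tt X}$ to anywhere outside ${\tt X}$ must emanate from $x^*$. Since the arc $(x,y)$ goes from ${\tt X}$ to ${\tt Y}\neq{\tt X}$, this forces $x=x^*$. Moreover $(x,y)$ is the only arc of $F$ from ${\tt X}$ into ${\tt Y}$, so in the forest $F$ one has $\min_{r\in{\tt Y}}\psi_{xr}=\psi_{xy}$, while for $q\in{\tt X}\setminus\{x\}$ there is no arc of $F$ from $q$ into ${\tt Y}$ at all, so that minimum is $+\infty$.

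Then I would plug into the right-hand formula of (\ref{lxy}) evaluated on $F$:
\begin{equation*}
\lambda_{\tt XY}(F)=\min_{q\in{\tt X}}\left(\lambda_{\tt X}^{\bullet q}(F)+\min_{r\in{\tt Y}}\psi_{qr}\right).
\end{equation*}
By the previous paragraph every term with $q\neq x$ is infinite (either the first or the second summand is $\infty$), so the minimum is attained at $q=x$ and equals $\lambda_{\tt X}^{\bullet x}(F)+\psi_{xy}=\it\Upsilon^{F|_{\tt X}}+\psi_{xy}=\lambda_{\tt X}^\bullet(F)+\psi_{xy}$. Substituting into the definition (\ref{wei}) of the splitting weight for $F$ yields $f_{\tt XY}^\aleph=\lambda_{\tt XY}(F)-\lambda_{\tt X}^\bullet(F)=\psi_{xy}$.

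The only mildly delicate point is the bookkeeping in step two—making sure that the unique outgoing arc of ${\tt X}$ in $F$ must emanate from the root of $F|_{\tt X}$—but this is forced immediately by the forest property (at most one outgoing arc per vertex) together with the fact that $F|_{\tt X}$ is itself a tree, so every non-root vertex of ${\tt X}$ has already used its outgoing arc inside ${\tt X}$.
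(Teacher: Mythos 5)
Your proof is correct and follows essentially the same route as the paper: both arguments reduce to observing that tree-divisibility makes ${\cal T}^\bullet_{\tt X}(F)$ and ${\cal T}_{\tt XY}(F)$ singletons determined by $F$, so that $f^\aleph_{\tt XY}=\lambda_{\tt XY}(F)-\lambda^\bullet_{\tt X}(F)$ is the weight of the single arc leaving ${\tt X}$. The only cosmetic difference is that the paper evaluates $\lambda_{\tt XY}(F)$ directly as $\it\Upsilon^F_{\tt X}$, the weight of the unique tree $F|_{\uparrow{\tt X}}$, whereas you route through the min-decomposition (\ref{lxy}) and check that all terms but the one at the root are infinite.
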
 
\begin{proof}
Indeed, let the arc $(\tt X,Y)$ be present in the forest $F^\aleph$. This means that ${\cal T}_{\tt XY}(F)= \{ F|_{\uparrow{\tt X} }\}\neq \emptyset$. Thus, in $F$ there is a unique arc $(x,y)$ such that $x\in{\tt X}$ and $y\in{\tt Y}$ with weight $\psi_{xy}$.
Let's see what the weight of the arc $(\tt X,Y)$ in the forest $F^\aleph$ is. According to (\ref{wei}), for the splitting $F^\aleph$ of the forest $F$ we have

\begin{equation}
f_{\tt XY}^\aleph=
\lambda_{\tt XY}(F) -\lambda_{\tt X}^\bullet(F)=\it\Upsilon^F_{\tt X}-\it\Upsilon^{F|_{\tt X}}=\psi_{xy} \ . 
\label{weifa}
\end{equation}
\end{proof}
This is obviously to be expected, since for a forest $F$, the forest $F^\aleph$ is its own splitting. 
The weight of the same arc ({\tt X,Y}) in the forest $F'\in{\cal F}^k(\it\Psi^\aleph)$, which is a representative of the forest $F$, is determined from (\ref{wei}), since $F'$ is a subgraph of the graph $\it\Psi^\aleph$. Therefore 

\begin{equation}
f'_{\tt XY}=\psi_{\tt XY}^\aleph=\lambda_{\tt XY} -\lambda_{\tt X}^\bullet \ . 
\label{weif}
\end{equation}
At the same time,
\begin{equation}
\lambda_{\tt XY}(F)\geq \lambda_{\tt XY}\ , \ \  \lambda^\bullet_{\tt X}(F)\geq \lambda^\bullet_{\tt X} \ .
\label{leq}
\end{equation}
The formula (\ref{weif}) includes the difference between the quantities $\lambda_{\tt XY}$ and $\lambda^\bullet_{\tt X}$. Therefore, the weight $\psi^\aleph_{\tt XY}$ can be either greater or less than the quantity $\psi_{xy}$ and the intuitively desired equality 

\begin{equation}
\psi^\aleph_{\tt XY}=\psi_{xy} 
\label{eq}
\end{equation}
is not satisfied (see Fig.\ref{p1}-\ref{p2}).

The proposed method of splitting preserves the tree structure of forests in the sense of Propositions 4 and 5. However, the weight of any arc $(\tt X,Y)$ in forests $F^\aleph$ and $F'$ (and the latter is a subgraph of the splitting directed graph $\it\Psi^\aleph$, and it is also a representative of forest $F$), appearing in these Propositions, is not the same. For an unweighted digraph $\it\Psi$, the forests $F^\aleph$ and $F'$ can be identified with each other since they have the same arcs. But if $\it\Psi$ is a weighted digraph, then the weight function of these forests is different.

\subsection{Minimum principals}

Let $F'\in{\cal F}^{k}(\it\Psi|\aleph)$, and let $F\in{\cal F}^{k}$ be its principal. And let forest $F$ have the minimum weight among all principals of forest $F'$. With respect to forest $F'$, forest $F$ is the minimal representable (or principal). We will call it the minimal principal.

\begin{theorem}
Let $F'\in{\cal F}^{k}(\it\Psi|\aleph)$ and $F$ be some principal of it. Then the forest $F$ is a minimal principal if and only if

\begin{equation}
\it\Upsilon^F_{\tt Z}= \lambda^\bullet_{\tt Z}, \  {\tt Z}\in\aleph_F^\bullet; \ \  \Upsilon^F_{\tt X} =\lambda_{\tt XY} ,  \ {\tt (X,Y)}\in{\tt A}F'. 
\label{l=}
\end{equation}
\end{theorem}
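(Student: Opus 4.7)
The plan is to decompose $\it\Upsilon^F$ according to the partition $\aleph$ using additivity of the outgoing weight, and to bound each summand by a quantity that depends only on $F'$. This will yield a uniform lower bound $M(F')$ on the weight of any principal of $F'$, which I would then show is attained. Starting from
\[
\it\Upsilon^F = \sum_{{\tt Z}\in\aleph^\bullet_F}\it\Upsilon^F_{\tt Z} + \sum_{{\tt X}\in\aleph^\circ_F}\it\Upsilon^F_{\tt X},
\]
for ${\tt Z}\in\aleph^\bullet_F$ the root of $F$ contained in ${\tt Z}$ must coincide with the root of the tree $F|_{\tt Z}$, so no arc of $F$ leaves ${\tt Z}$ and $\it\Upsilon^F_{\tt Z}=\it\Upsilon^{F|_{\tt Z}}\geq\lambda^\bullet_{\tt Z}$. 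For ${\tt X}\in\aleph^\circ_F$ exactly one arc of $F$ leaves ${\tt X}$, and the principal property of $F$ forces this arc to enter the unique ${\tt Y}$ with $({\tt X,Y})\in{\tt A}F'$; hence $F|_{\uparrow{\tt X}}\in{\cal T}_{\tt XY}$ and $\it\Upsilon^F_{\tt X}\geq\lambda_{\tt XY}$. Summing produces
\[
\it\Upsilon^F\geq M(F'):=\sum_{{\tt Z}\in\aleph^\bullet_F}\lambda^\bullet_{\tt Z}+\sum_{({\tt X,Y})\in{\tt A}F'}\lambda_{\tt XY}.
\]

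Next I would show this bound is sharp by invoking the union construction from the proof of Proposition~5, now with trees drawn from $\tilde{\cal T}^\bullet_{\tt Z}$ and $\tilde{\cal T}_{\tt XY}$; this produces a principal of $F'$ of weight exactly $M(F')$. Consequently a principal $F$ minimizes weight if and only if equality holds in every componentwise bound above, which is precisely the first two equalities in (\ref{l=}). The third equality is then automatic once the second holds: for ${\tt X}\in\aleph^\circ_F$ with outgoing arc $(x,y)$ the identity $\it\Upsilon^F_{\tt X}=\it\Upsilon^{F|_{\tt X}}+\psi_{xy}$ combined with condition~2 and formula (\ref{lxy}) gives $\it\Upsilon^{F|_{\tt X}}+\psi_{xy}=\lambda_{\tt XY}\leq\lambda^{\bullet x}_{\tt X}+\psi_{xy}$, so $\it\Upsilon^{F|_{\tt X}}\leq\lambda^{\bullet x}_{\tt X}$; the reverse inequality holds since $F|_{\tt X}\in{\cal T}^{\bullet x}_{\tt X}$, forcing equality.

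The only real technical point is pinning down the correct ${\tt Y}$ for the outgoing restriction $F|_{\uparrow{\tt X}}$, which relies on tree-divisibility of $F$ (ensuring uniqueness of the arc out of ${\tt X}$) together with the principal property (identifying the image atom of the partition), and then invoking Proposition~5 to guarantee that the lower bound $M(F')$ is actually attained by some principal. With those two facts in hand, the converse direction is immediate: any $F$ satisfying the three conditions of (\ref{l=}) has $\it\Upsilon^F=M(F')$ and is therefore a minimal principal.
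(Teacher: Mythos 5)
Your proof is correct, but it is organized differently from the paper's. The paper fixes a minimal principal $F$ and performs local exchanges: it replaces the arcs emanating from ${\tt Z}$ (resp.\ from ${\tt X}$) by those of a tree from $\tilde{\cal T}^\bullet_{\tt Z}$ (resp.\ $\tilde{\cal T}_{\tt XY}$), uses Property 3 to see that the result is again a forest with the same representative $F'$, and deduces each equality of (\ref{l=}) from ${\it\Upsilon}^G-{\it\Upsilon}^F\ge 0$; the third equality is obtained by one more exchange of the same kind. You instead establish the componentwise lower bounds ${\it\Upsilon}^F_{\tt Z}\ge\lambda^\bullet_{\tt Z}$ and ${\it\Upsilon}^F_{\tt X}\ge\lambda_{\tt XY}$ directly from the definitions (using that $F|_{\tt Z}\in{\cal T}^\bullet_{\tt Z}$ and $F|_{\uparrow{\tt X}}\in{\cal T}_{\tt XY}$), sum them to a bound $M(F')$ depending only on $F'$, and show that $M(F')$ is attained by running the Proposition 5 construction with minimal trees. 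This buys you both directions of the equivalence in one stroke (the paper's written proof only treats ``minimal $\Rightarrow$ equalities'' and leaves the converse implicit), and it exposes the fact that the third equality in (\ref{l=}) is a consequence of the second, via (\ref{lxy}) combined with ${\it\Upsilon}^{F|_{\tt X}}\ge\lambda^{\bullet x}_{\tt X}$ --- a redundancy the paper does not remark on, since it proves that equality by a separate exchange. What the paper's route buys in return is that it never has to re-verify acyclicity of a union of minimal trees: Property 3 does that work locally, whereas your attainment step tacitly reuses the observation (also unstated in Proposition 5) that a contour in the union would project to a contour in the forest $F'$. That is a minor point, at the same level of rigor as the paper itself; your argument is sound.
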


\begin{proof}
There are exactly $k$ sets of partition $\aleph$ labeled by forest $F$: $|\aleph_F^\bullet|=k$. For any of them, $\tilde{\cal T}^\bullet_{\tt Z}(F)={\cal T}^\bullet_{\tt Z}(F)=\{ F|_{\tt Z}\}$. Let $T\in\tilde{\cal T}^\bullet_{\tt Z}$. By Property 3, the graph $G=F^T_{\uparrow{\tt Z}}$ is a forest. Obviously, $G\in{\cal F}^{*k}$ and its representative is $F'$. By the hypothesis of the theorem,   

\begin{equation*}
0\leq \it\Upsilon^G-\it\Upsilon^F=\it\Upsilon^G_{\tt Z}-\it\Upsilon^F_{\tt Z}=\lambda^\bullet_{\tt Z}- \it\Upsilon^F_{\tt Z} \ ,
\end{equation*}
 from where $\it\Upsilon^F_{\tt Z}=\lambda^\bullet_{\tt Z}$. In this case $\it\Upsilon^F_{\tt Z} =\lambda^\bullet_{\tt Z}(F)$. 

Now let $T\in\tilde{\cal T}_{\tt XY}$. In the forest $F$, a single arc comes out from the set ${\tt X}$ (namely, from the vertex $x$) and enters the set ${\tt Y}$. In the tree $T$, the situation is exactly the same. Although there, generally speaking, the arc comes out of some other vertex $x'\in{\tt X}$, whose entry belongs to the set ${\tt Y}$. Therefore, the graph $H=F^T_{\uparrow{\tt X}}$ is still a forest and $H\in{\cal F}^{*k}$ and its representative is $F'$. By the condition of the theorem,

\begin{equation*}
0\leq \it\Upsilon^H-\it\Upsilon^F=\it\Upsilon^H_{\tt X}-\it\Upsilon^F_{\tt X}=\lambda_{\tt XY}- \it\Upsilon^F_{\tt X} \ ,
\end{equation*}
whence $\it\Upsilon^F_{\tt X}=\lambda_{\tt XY}$. In this case, $\it\Upsilon^F_{\tt X} = \lambda_{\tt XY}(F)$, since $\tilde{\cal T}_{\tt XY}(F)={\cal T}_{\tt XY}(F)=\{ F|_{\uparrow{\tt Z}}\}$. 
 

 \end{proof}
 
 Note that given (\ref{lxy}), from the right-hand side (\ref{l=}) it follows that $\Upsilon^{F|_{\tt X}}= \lambda^{\bullet x}_{\tt X}$, where $x$ is the root of the generated tree $F|_{\tt X}$.
 
Note also that there remains an ambiguity --- the forest $F'$ may have several minimal principals. This is because, according to Proposition 5, when constructing the minimal principal in (\ref{ff'}), there remains arbitrariness in the choice of trees, now from the sets $\tilde{\cal T}_{\tt XY}$ and $\tilde{\cal T}^\bullet_{\tt Z}$.

Moreover, the transition to minimal principals does not save in the sense that equality (\ref{eq}) cannot be achieved. For arcs outgoing in the forest $F$ from elements of the set $\aleph^\circ_F$, it does not matter at all what happens to the arcs in the elements of $\aleph^\bullet_F$. And for any ${\tt X}\in\aleph^\circ_F$ the inequality still holds: $\lambda^\bullet_{\tt X}(F)= \lambda^{\bullet x}_{\tt X}\geq \lambda^{\bullet }_{\tt X}$. But now $\lambda_{\tt XY}(F) =\lambda_{\tt XY}$ and the following is true. 

\begin{theorem}
Let $F'\in{\cal F}^k(\it\Psi|\aleph)$, $F$ be its minimal principal, ${\tt X}\in\aleph_F^\circ$, 
and $(x,y)$ be the unique arc that comes from ${\tt X}$ in the forest of $F$, and let $y$ belong to the set ${\tt Y}\in\aleph$. Then

\begin{equation}
{\it f}'_{\tt XY}=\psi^\aleph_{\tt XY}\geq \psi_{xy} =f^\aleph_{\tt XY} \ . 
\label{vv}
\end{equation}
Equality in (\ref{vv}) is achieved when $ F|_{\tt X}\in\tilde{\cal T}^\bullet_{\tt X}$.
\end{theorem}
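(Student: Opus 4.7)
My plan is to derive the chain in (\ref{vv}) by reading off the two outer equalities from definitions and reducing the middle inequality to a statement about tree minima on $\tt X$, then apply Theorem~1 to the minimal principal $F$.

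\textbf{Outer equalities.} The equality $f'_{\tt XY}=\psi^\aleph_{\tt XY}$ is immediate: $F'$ is a spanning subgraph of $\it\Psi^\aleph$, so every arc of $F'$ carries the weight it has in $\it\Psi^\aleph$, which is defined by (\ref{wei}). The equality $\psi_{xy}=f^\aleph_{\tt XY}$ is Proposition~6 (formula (\ref{weifa})) applied to the tree-divisible forest $F$, using that $(x,y)$ is the unique arc of $F$ going out of $\tt X$.

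\textbf{Middle inequality.} The heart of the argument uses Theorem~1. Since $\tt X\in\aleph_F^\circ$, there is a unique arc of $F$ leaving $\tt X$ at the vertex $x$, and $({\tt X,Y})\in{\tt A}F'$. Theorem~1 therefore gives both
\begin{equation*}
\it\Upsilon^F_{\tt X}=\lambda_{\tt XY} \qquad\text{and}\qquad \it\Upsilon^{F|_{\tt X}}=\lambda^{\bullet x}_{\tt X},
\end{equation*}
where $x$ is the root of the induced tree $F|_{\tt X}$. By additivity of the outgoing weight (applied to the disjoint decomposition of the arcs leaving $\tt X$ into the internal arcs of $F|_{\tt X}$ and the single crossing arc $(x,y)$),
\begin{equation*}
\psi_{xy}=\it\Upsilon^F_{\tt X}-\it\Upsilon^{F|_{\tt X}}=\lambda_{\tt XY}-\lambda^{\bullet x}_{\tt X}.
\end{equation*}
Comparing with $\psi^\aleph_{\tt XY}=\lambda_{\tt XY}-\lambda^\bullet_{\tt X}$ from (\ref{wei}), the difference collapses to
\begin{equation*}
\psi^\aleph_{\tt XY}-\psi_{xy}=\lambda^{\bullet x}_{\tt X}-\lambda^\bullet_{\tt X}\geq 0,
\end{equation*}
where the inequality is just the definition $\lambda^\bullet_{\tt X}=\min_{q\in{\tt X}}\lambda^{\bullet q}_{\tt X}$ from (\ref{bt}).

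\textbf{Equality case.} The previous line also shows that equality in (\ref{vv}) is equivalent to $\lambda^{\bullet x}_{\tt X}=\lambda^\bullet_{\tt X}$. Combining this with $\it\Upsilon^{F|_{\tt X}}=\lambda^{\bullet x}_{\tt X}$ from Theorem~1 yields $\it\Upsilon^{F|_{\tt X}}=\lambda^\bullet_{\tt X}$, i.e.\ $F|_{\tt X}\in\tilde{\cal T}^\bullet_{\tt X}$, and the implication is reversible.

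There is no real obstacle here: everything reduces to the characterization already established in Theorem~1 plus the tautological inequality between $\lambda^{\bullet x}_{\tt X}$ and its minimum $\lambda^\bullet_{\tt X}$ over the root. The only point requiring care is making sure the hypothesis $\tt X\in\aleph_F^\circ$ is used to invoke the \emph{second} clause of (\ref{l=}) (the $\lambda_{\tt XY}$ formula), which is the clause that pertains to unlabeled atoms of $F$; the labeled atoms are explicitly set aside in the theorem statement, because for them no crossing arc $(x,y)$ exists.
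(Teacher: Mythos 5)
Your proposal is correct and follows essentially the same route as the paper: the outer equalities come from $F'$ being a subgraph of $\it\Psi^\aleph$ and from Proposition~6, and the middle inequality is reduced via Theorem~1 to $\psi^\aleph_{\tt XY}-\psi_{xy}=\lambda^{\bullet x}_{\tt X}-\lambda^\bullet_{\tt X}\geq 0$, with the equality case read off exactly as in the paper. The only cosmetic difference is that you compute $\psi_{xy}=\it\Upsilon^F_{\tt X}-\it\Upsilon^{F|_{\tt X}}$ directly by additivity, whereas the paper routes the same identity through $f^\aleph_{\tt XY}=\lambda_{\tt XY}(F)-\lambda^\bullet_{\tt X}(F)$ and the equalities $\lambda_{\tt XY}(F)=\lambda_{\tt XY}$, $\lambda^\bullet_{\tt X}(F)=\lambda^{\bullet x}_{\tt X}$.
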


\begin{proof}
The forest $F'$ is a subgraph of the partition graph $\Psi^\aleph$, so the weights of its arcs $({\tt X,Y})$ are the same as those of $\Psi^\aleph$, that is, $f'_{\tt XY}=\psi_{\tt XY}^\aleph $. By Proposition 6, the right equality in (\ref{vv}) holds.

Let us compare the weight $\psi^\aleph_{\tt XY}$ with the weight $\psi_{xy}$. From the definition of the splitting weights (\ref{wei}) and its variant for $F^\aleph $ (\ref{weifa}) we obtain

\begin{equation}
 \psi^\aleph_{\tt XY}-\psi_{xy}= (\lambda_{\tt XY} -\lambda_{\tt X}^\bullet)- f_{\tt XY}^\aleph =(\lambda_{\tt XY} -\lambda_{\tt X}^\bullet)- (\lambda_{\tt XY}(F) -\lambda_{\tt X}^{\bullet}(F) ) \ . 
 \label{lll}
\end{equation} 
By Theorem 1, $\lambda_{\tt XY}(F) =\lambda_{\tt XY}$ and $\lambda^\bullet_{\tt X}(F) =\lambda^{\bullet x}_{\tt X}$. Therefore, (\ref{lll}) becomes

\begin{equation}
 \psi^\aleph_{\tt XY}-\psi_{xy} = \lambda_{\tt X}^{\bullet x} -\lambda_{\tt X}^\bullet \geq 0 \ . 
 \label{ll}
\end{equation}
The vertex $x$ is the root of the tree $F|_{\tt X}\in\tilde{\cal T}^{\bullet x}_{\tt X}$. 
We obtain the equality to zero (\ref{ll}) in a specific situation when $\lambda_{\tt X}^{\bullet x} =\lambda_{\tt X}^\bullet$, that is, when $\lambda_{\tt X}^{\bullet }(F) =\lambda_{\tt X}^\bullet$. And this means that $ F|_{\tt X}\in\tilde{\cal T}^\bullet_{\tt X}$. In this case, the set ${\tt X}$ does not contain the roots of the forest $F$. 
\end{proof}

Note that the forest $F$ from Theorems 1,2 is not minimal in the sense of the original definition. It is not even minimal among divisibles. It is simply a minimal principal of some representative $F'\in{\cal F}^k(\it\Psi|\aleph)$. And the latter in turn is simply a spanning subgraph of the splitting  digraph $\it\Psi$. It is not minimal among subgraphs with the same number of roots. But the values $\lambda_{\tt XY}$, $\lambda^{\bullet x}_{\tt X}$, $ \lambda^\bullet_{\tt Z}$ are defined as minima among all possible trees of the original graph $\it\Psi$. Therefore, if the sets ${\tt X}$, ${\tt Y}$, ${\tt Z}$ were arbitrary subsets of the set of all vertices ${\tt N}$, then we would not obtain any equalities (\ref{l=}). And since the original graph itself is a divisible by partition  $\aleph$ and the indicated subsets are elements of this partition, then the equalities (\ref{l=}) are absolutely natural and should be expected.   

For minimal principals, left inequalities (\ref{leq}) turn into equalities. But right inequalities remain.

\subsection{Minimum among divisibles}

We can introduce a subset ${\cal F^*}^k$ of the set ${\cal F}^k$ whose forests are tree-divisible: $F\in {\cal F^*}^k$ $\Leftrightarrow$ $F\in{\cal F}^k$ and $F$ are tree-divisible. And now, in order to match the weights of the arcs of the splitting graphs $\it\Psi^\aleph$ and $F^\aleph$, we can select from this set of forests a subset $\tilde{\cal F}^{*k}$ of forests that have the minimum possible weight among them: 
$F\in \tilde{\cal F}^{*k}$ $\Leftrightarrow$ 
$F \in {\cal F}^{*k}$ and $\it\Upsilon^F=\underset{G\in{\cal F}^{*k}}{\min}\it\Upsilon^{G}$. 

Note that the set $\tilde{\cal F}^{*k}$ (minimal among divisibles) does not coincide with the set of divisible minimal forests (those forests from $\tilde{\cal F}^k$ that are divisible). The set $\tilde{\cal F}^k$ may not contain any divisible forests at all, and at the same time $\tilde{\cal F}^{*k} \neq \emptyset$ may hold. In this case, only some forests of ${\cal F}^k\setminus \tilde{\cal F}^k $ are divisible. In the example in Fig. \ref{p1}, the minimum-weight spanning tree is not divisible by partition $\aleph$.

\begin{theorem}
Let $F'\in{\cal F}^{k}(\it\Psi|\aleph)$ and $F$ be its minimal principal. Then $F\in\tilde{\cal F}^{*k}$ if and only if $F'\in\tilde{\cal F}^{k}(\it\Psi|\aleph)$.
  \end{theorem}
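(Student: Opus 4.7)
The plan is to reduce the theorem to a single bookkeeping identity: for every $F'\in{\cal F}^k(\it\Psi|\aleph)$ and every minimal principal $F$ of $F'$,
\begin{equation*}
\it\Upsilon^F = \it\Upsilon^{F'} + C, \quad C=\sum_{{\tt X}\in\aleph}\lambda_{\tt X}^\bullet,
\end{equation*}
where $C$ depends on $\it\Psi$ and $\aleph$ alone. With this in hand, minimizing $\it\Upsilon^F$ over ${\cal F}^{*k}$ becomes, up to the additive constant $C$, the same as minimizing $\it\Upsilon^{F'}$ over ${\cal F}^k(\it\Psi|\aleph)$, which is exactly the desired equivalence.

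To derive the identity I would decompose $\it\Upsilon^F=\sum_{{\tt X}\in\aleph}\it\Upsilon^F_{\tt X}$ by additivity of the outgoing weight, and then use Theorem 1 to evaluate each summand: $\it\Upsilon^F_{\tt Z}=\lambda^\bullet_{\tt Z}$ for ${\tt Z}\in\aleph_F^\bullet$, and $\it\Upsilon^F_{\tt X}=\lambda_{\tt XY}$ for ${\tt X}\in\aleph_F^\circ$ with the unique arc $({\tt X,Y})\in{\tt A}F'$. On the other side, substituting $\psi^\aleph_{\tt XY}=\lambda_{\tt XY}-\lambda^\bullet_{\tt X}$ into $\it\Upsilon^{F'}=\sum_{({\tt X,Y})\in{\tt A}F'}\psi^\aleph_{\tt XY}$ and using ${\tt K}_{F'}=\aleph_F^\bullet$ (so that the sources of ${\tt A}F'$ run precisely over $\aleph_F^\circ$), the $\lambda_{\tt XY}$ contributions cancel and what survives is $\it\Upsilon^F-\it\Upsilon^{F'}=\sum_{{\tt Z}\in\aleph_F^\bullet}\lambda^\bullet_{\tt Z}+\sum_{{\tt X}\in\aleph_F^\circ}\lambda^\bullet_{\tt X}=C$. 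For the ``if'' direction I also want the inequality $\it\Upsilon^G\geq \it\Upsilon^{G'}+C$ for every tree-divisible $G\in{\cal F}^k$ with representative $G'$; this follows from the same decomposition, replacing the equalities of Theorem 1 with $\it\Upsilon^G_{\tt Z}\geq \lambda^\bullet_{\tt Z}$ (since $G|_{\tt Z}\in{\cal T}^\bullet_{\tt Z}$ by tree-divisibility) and $\it\Upsilon^G_{\tt X}\geq \lambda_{\tt XY}$ (since $G|_{\uparrow{\tt X}}\in{\cal T}_{\tt XY}$ when ${\tt X}\in\aleph_G^\circ$, by the remark following the definition of the outgoing restriction).

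With these facts in hand, each direction is a one-liner. If $F\in\tilde{\cal F}^{*k}$, then for any $H'\in{\cal F}^k(\it\Psi|\aleph)$ with minimal principal $H$ (which lies in ${\cal F}^{*k}$), we have $\it\Upsilon^{F'}+C=\it\Upsilon^F\leq \it\Upsilon^H=\it\Upsilon^{H'}+C$, so $\it\Upsilon^{F'}\leq \it\Upsilon^{H'}$ and $F'\in\tilde{\cal F}^k(\it\Psi|\aleph)$. Conversely, if $F'\in\tilde{\cal F}^k(\it\Psi|\aleph)$, then for any tree-divisible $G$ with representative $G'$, $\it\Upsilon^G\geq \it\Upsilon^{G'}+C\geq \it\Upsilon^{F'}+C=\it\Upsilon^F$, so $F\in\tilde{\cal F}^{*k}$. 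I do not anticipate a serious obstacle beyond the algebraic bookkeeping; the only real subtlety is noticing that although the decomposition $\aleph=\aleph_F^\bullet\cup\aleph_F^\circ$ depends on $F'$, the surviving sum $\sum_{{\tt X}\in\aleph}\lambda^\bullet_{\tt X}$ is taken over all of $\aleph$ and is therefore invariant, which is precisely what makes the reduction work.
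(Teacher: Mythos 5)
Your proposal is correct and follows essentially the same route as the paper: the key identity $\it\Upsilon^{F'}=\it\Upsilon^F-\sum_{{\tt Y}\in\aleph}\lambda^\bullet_{\tt Y}$ obtained by combining the decomposition of $\it\Upsilon^F$ from Theorem 1 with the definition (\ref{wei}) of the splitting weights, and then the observation that the correction term is a constant independent of the forest. The only difference is that you spell out the ``if'' direction via the inequality $\it\Upsilon^G\geq\it\Upsilon^{G'}+C$ for an arbitrary tree-divisible $G$, whereas the paper routes every comparison through minimal principals; both are valid and equivalent in substance.
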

\begin{proof} 

For a minimal principal $F$ of forest $F'$, using formula (\ref{l=}) from Theorem 1, we have:

\begin{equation*}
\it\Upsilon^F=\sum_{\tt Y\in\aleph_F}{\it\Upsilon^F_{\tt Y}} =\sum_{\tt X\in\aleph^\circ_F}{\it\Upsilon^F_{\tt X}}+\sum_{\tt Z\in\aleph^\bullet_F}{\it\Upsilon^F_{\tt Z}}=  \sum_{(\tt X,Y)\in{\tt A}F'}\lambda_{\tt XY}+ \sum_{{\tt Z}\in\aleph^\bullet_{F}} \lambda^\bullet_{\tt Z}.
\end{equation*}
Let us also write out the weight of the forest $F'$, the weights of the arcs of which are determined from (\ref{wei}): 

\begin{equation}
\it\Upsilon^{F'}= \sum_{(\tt X,Y)\in{\tt A}F'} (\lambda_{\tt XY}- \lambda^\bullet_{\tt X})=\sum_{(\tt X,Y)\in{\tt A}F'} \lambda_{\tt XY} -\sum_{{\tt X}\in\aleph^\circ_F} \lambda^\bullet_{\tt X} \pm \sum_{{\tt Z}\in\aleph^\bullet_F} \lambda^\bullet_{\tt Z} = \it\Upsilon^F- \sum_{{\tt Y}\in\aleph} \lambda^\bullet_{\tt Y} \ .
\label{yps}
\end{equation}
Note that the last sum in (\ref{yps}) is a fixed value.

Now let $G'$ be some forest of ${\cal F}^{k}(\it\Psi|\aleph)$. Generally speaking, it has a different set of arcs than $F'$. Nevertheless, the weight of any principal $G$ can still be represented as
\begin{equation*}
\it\Upsilon^G=\sum_{\tt Y\in\aleph}\it\Upsilon^G_{\tt Y} =\sum_{\tt X\in\aleph^\circ_G}{\it\Upsilon^G_{\tt X}}+\sum_{\tt Z\in\aleph^\bullet_G}{\it\Upsilon^G_{\tt Z}} .
\end{equation*}
f $G$ is a minimal principal, then using the formulas (\ref{l=}), the last equality can be rewritten as 

\begin{equation*}
\it\Upsilon^G= \sum_{(\tt X,Y)\in{\tt A}G'}\lambda_{\tt XY}+ \sum_{{\tt Z}\in\aleph^\bullet_{G}} \lambda^\bullet_{\tt Z}.
\end{equation*} 
In the same way as we determined the weight of $F'$, for $G'$ we obtain

\begin{equation}
\it\Upsilon^{G'}= \it\Upsilon^G- \sum_{{\tt Y}\in\aleph} \lambda^\bullet_{\tt Y} \ .
\label{ypsg}
\end{equation}
Comparing (\ref{yps}) and (\ref{ypsg}) we find that $F$ and $F'$ are minimal (each in its own class) or not simultaneously.
\end{proof}

Note that the transition from minimal principals to minimal among divisibles does not change the situation with the possibility of satisfying equality (\ref{eq}). Indeed, for any ${\tt X}\in\aleph^\circ_F$, on the one hand $F|_{\tt X}=\lambda^\bullet_{\tt X}(F)= \lambda^{\bullet x}_{\tt X}$, where $x$ is the root of the generated tree $F|_{\tt X}$. On the other hand $\lambda^{\bullet x}_{\tt X}\geq \lambda^{\bullet }_{\tt X}$. The value $\lambda^{\bullet }_{\tt X}= \lambda^{\bullet q}_{\tt X}$, where $q$ is some vertex from the set ${\tt X}$. And the vertex $q$ does not have to coincide with $x$. Therefore (\ref{ll}) remains an inequality.

The situation of equality $\psi^\aleph_{\tt XY}= \psi_{xy}$ (aka equality to zero (\ref{ll})), in particular, is realized when the weights of the original graph form a symmetric matrix. That is, in fact, when we are dealing with an undirected original graph. We will consider this issue in more detail in the section on undirected graphs.

\subsection{Removing irrelevant arcs} 

\begin{figure}[h]
\unitlength=0.7mm
\begin{center}
\begin{picture}(195,50)

\put(24,45){$\hat{\it\Psi}$}

\put(28,21){\scriptsize $ u$}
\put(27,2){\scriptsize $ v$}
\put(3,2){\scriptsize $ y$}
\put(48,20){\scriptsize $ r$}
\put(47,2){\scriptsize $ q$}
\put(49,38){\scriptsize $ p$}
\put(2,21){\scriptsize $ x$}
\put(2,38){\scriptsize $ s$}
\put(23,38){\scriptsize $ t$}

\put(8,6){\vector(1,0){17}}
\put(25,5){\vector(-1,0){17}}
\put(7,8){\vector(0,1){12}}
\put(6,20){\vector(0,-1){12}}

\put(8,21){\vector(1,0){17}}
\put(6,23){\vector(0,1){12}}
\put(45,20){\vector(-4,-3){17}}
\put(28,36){\vector(1,0){17}}
\put(46,35){\vector(0,-1){12}}
\put(46,8){\vector(0,1){12}}
\put(25,36){\vector(-4,-3){17}}
\put(26,23){\vector(0,1){12}}
\put(47,20){\vector(0,-1){12}}

\put(5,5){$\bullet$}
\put(5,20){$\bullet$}
\put(5,35){$\bullet$}
\put(25,5){$\bullet$}
\put(25,20){$\bullet$}
\put(25,35){$\bullet$}
\put(45,5){$\bullet$}
\put(45,20){$\bullet$}
\put(45,35){$\bullet$}

\put(17,1){\scriptsize $7$}
\put(17,7){\scriptsize $5$}
\put(15,22){\scriptsize $1$}
\put(14,30){\scriptsize $1$}
\put(7,27){\scriptsize $1$}
\put(8,13){\scriptsize $1$}
\put(2,13){\scriptsize $4$}

\put(35,31){\scriptsize $1$}
\put(33,15){\scriptsize $1$}
\put(43,12){\scriptsize $2$}
\put(43,27){\scriptsize $2$}
\put(23,27){\scriptsize $2$}
\put(48,12){\scriptsize $1$}

\put(82,45){$\hat{\it\Psi}|\aleph = \it\Psi|\aleph$}

\put(86,6){\oval(24,12)}
\put(86,29){\oval(27,25)}
\put(116,20){\oval(10,41)}

\put(78,13){\scriptsize $3$}
\put(93,13){\scriptsize $5$}
\put(105,23){\scriptsize $2$}
\put(104,14){\scriptsize $2$}

\put(114,19){\vector(-4,-3){17}}
\put(97,27){\vector(1,0){17}}
\put(92,20){\vector(0,-1){12}}
\put(81,8){\vector(0,1){12}} 

\put(85,27){${\tt X}$}
\put(85,4){${\tt Y}$}
\put(115,22){${\tt Z}$}

\put(148,45){$\{ T\}=\tilde{\cal F}^{*1}(\hat{\it\Psi})$}

\put(148,6){\vector(1,0){17}}
\put(146,35){\vector(0,-1){12}}
\put(168,36){\vector(1,0){17}}
\put(148,21){\vector(1,0){17}}
\put(166,23){\vector(0,1){12}}
\put(185,20){\vector(-4,-3){17}}
\put(186,8){\vector(0,1){12}}
\put(186,35){\vector(0,-1){12}}

\put(145,5){$\bullet$}
\put(145,20){$\bullet$}
\put(145,35){$\bullet$}
\put(165,5){$\bullet$}
\put(165,20){$\bullet$}
\put(165,35){$\bullet$}
\put(185,5){$\bullet$}
\put(185,20){$\bullet$}
\put(185,35){$\bullet$}
\put(156,6){\oval(24,12)}
\put(156,29){\oval(27,25)}
\put(186,21){\oval(10,41)}

\put(175,31){\scriptsize $1$}
\put(155,22){\scriptsize $1$}
\put(163,27){\scriptsize $2$}
\put(147,27){\scriptsize $1$}
\put(155,7){\scriptsize $5$}
\put(174,15){\scriptsize $1$}
\put(183,12){\scriptsize $2$}
\put(183,27){\scriptsize $2$}

\end{picture} 
\caption{\small 
The splittings of the original graph and the lightweight graph coincide. The minimum of the divisible spanning trees $T$ in the lightweight graph $\hat{\it\Psi}$ is the same as in the original. The forest $F$ from Fig. \ref{p2} is a spanning forest of the lightweight graph $\hat{\it\Psi}$, while the forest $H$, which has the same representative, is not a subgraph of $\hat{\it\Psi}$.}
\label{p3}
\end{center}
\end{figure}
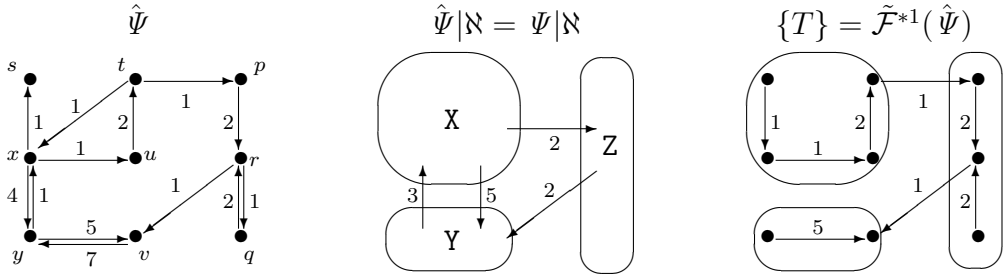

The weights of the splitting graph $\it\Psi|\aleph$ are calculated using the formula (\ref{wei}). We leave in $\it\Psi$ only those arcs that are present in the trees from $\tilde{\cal T}_{\tt XY}$ for all ordered pairs $(\tt X,Y)$ of atoms, and in trees from $\tilde{\cal T}^\bullet_{\tt X}$ for all ${\tt X}\in\aleph$. 
As a result, we obtain some lightweight graph $\hat{\it\Psi}$ such that $\hat{\it\Psi}|\aleph=\it\Psi|\aleph$. Let $F'\in{\cal F}^k(\hat{\it\Psi}|\aleph)$ for admissible $k$. Its principals belong to the set ${\cal F}^k(\hat{\it\Psi})$. But these principals are not minimal among the divisibles. Moreover, alas, not all of them are minimal principals. In this sense, the graph $\hat{\it\Psi}$ is not a graph of minimal principals, and certainly not a graph of minimal divisibles. The spanning forest $F$ from Fig. \ref{p2} is also a spanning forest of the lightweight graph (see Fig. \ref{p3}), but is not minimal among divisible ones. In turn, its representative $F'$ belongs to the set ${\cal F}^2(\hat{\it\Psi}|\aleph)$, but is not the minimal two-component in the class of representatives. On the contrary, it is the "heaviest" of these forests (see Fig. \ref{p1}). And the minimal one is a spanning forest consisting of one arc $(\tt X,Z)$ (or arc $(\tt Z,Y)$).

However, for any forest $F\in\tilde{\cal F}^{*k}$ nothing has changed (the minimal among the divisibles, in particular, is someone's minimal principal), but now $F\in\tilde{\cal F}^{*k}(\hat{\it\Psi})$. 
By Proposition 4 and Theorem 3, it now corresponds to exactly one representative $F'$ in the set $\tilde{\cal F}^k(\hat{\it\Psi}|\aleph)$. And the forest $F'$ is the same as the representative of the forest $F$ in the set $\tilde{\cal F}^k(\it\Psi|\aleph)$. And the weights of its arcs are exactly the same.

The very fact that some arcs can be removed from the original graph $\it\Psi$ without violating the tree structure corresponding to the partition $\aleph$ and preserving all minimal principals (including the minimal ones among the divisible ones) is, although quite natural, significant. Fig.\ref{p3} shows a lightweight graph $\hat{\it\Psi}$ that has the same splitting as the original one and, in particular, has the same minimal forest principals from $\it\Psi|\aleph$. It has a third fewer arcs.

\section{Undirected source graph}

From this point onwards, the terms graph, forest, tree refer specifically to undirected graphs, unless the opposite is clear from the context. For directed graphs, the exact designation is used --- directed graph, entering forest, entering tree.  

\subsection{Definitions for undirected graph}
 For undirected graphs, we denote the set of vertices in the same way as for directed ones, and we denote the set of edges of the graph {\sc P} by ${\tt E}{\textsc P}$.

A forest is an acyclic graph; a tree is a connected component of a forest, or simply a connected acyclic graph. 

In this section, the starting point is a weighted undirected graph $\Phi$ on the same vertex set ${\tt V}{\Phi}={\tt N}$. Its spanning forests and trees on subsets are considered.

Depending on the context, the pair $(i,j)$, for $i\neq j$, can be considered both as an arc of the directed graph $\it\Psi$ and as an edge of the graph $\Phi$. For the graph $ \Phi$, the pairs $(i,j)$ and $(j,i)$ define the same edge with weight $\textsc{\phi}_{ij}$.

For a subgraph {\sc G} of a graph $ \Phi$ and a subset of the vertex set ${\tt D}\subseteq {\tt N}$, we introduce standard weights  
\begin{equation}
\Upsilon^{\textsc G}_{\tt S}=\sum_{\begin{smallmatrix}i,j\in{\cal S} \\ (i,j)\in {\cal E}{\textsc G}\end{smallmatrix}} \textsc{\phi}_{ij} \ , \ \ \Upsilon^{\textsc G}=\Upsilon^{\textsc G}_{\tt N}=\sum_{(i,j)\in {\cal E}{\textsc G}} \textsc{\phi}_{ij} \ . \label{vesp}
\end{equation}
Here, in contrast to the directed situation (\ref{ves}), always $\Upsilon^{\textsc G}_{\tt D}=\Upsilon^{{\textsc G}|_{\tt D}} $, but there is no additivity property.  

The sets of spanning forests of the graph $\Phi$ consisting of $k$ trees are denoted by ${\sf F}^k$. Forests {\sc F} from ${\sf F}^k$ on which the minimum of weight $\Upsilon^{\textsc F}$ is achieved are called minimal. The set of such forests is denoted by $\tilde{\sf F}^k$. 

If ${\tt D}\subseteq {\tt N}$, then by ${\sf T}_{\tt D}$ we mean the set consisting of trees {\sc T} that are subgraphs of {\sf P} such that ${\cal V}{\textsc T}={\tt D}$. Trees ${\textsc T}\in {\sf T}_{\tt D}$ on which the minimum of weight $\Upsilon^{\textsc T}$ is achieved are called minimal on the set ${\tt D}$. The set of such trees is denoted by $\tilde{\sf T}_{\tt D}$, and the minimum of weight itself is denoted by $\nu_{\tt D}$: 

\begin{equation}
\nu_{\tt D}=\min_{{\textsc T}\in{\sf
T}_{\tt D}}\Upsilon^{\textsc T}  .  
\label{nu}
\end{equation}

Similarly to the digraphs, we introduce a set of trees ${\sf T}_{\tt XY}$ according to the rule: ${\textsc T}\in{\sf T}_{\tt XY}$ $\Leftrightarrow$ ${\textsc T}|_{\tt X}\in{\sf T}_{\tt X}$, $|{\tt V}{\textsc T}|=|{\tt X}|+1$ and ${\tt V}{\textsc T}\cap{\tt Y}\neq \emptyset$. Thus, in the tree {\sc T} there is a single edge, one endpoint of which belongs not to the set {\tt X}, but to the set {\tt Y}. The minimum weight of such a tree is denoted by $\nu_{\tt XY}$, and the set of trees on which it is achieved is denoted by $\tilde{\sf T}_{\tt XY}$: $\textsc T\in\tilde{\sf T}_{\tt XY}$ $\Leftrightarrow$ $\textsc T\in{\sf T}_{\tt XY}$ and $\Upsilon^{\textsc T}=\nu_{\tt XY} $.

We also define the set ${\sf T}_{\tt X}^\circ=\underset{{\tt Y}\in\aleph\setminus \{ {\tt X} \}}{\cup}{\sf T}_{\tt XY}$, the corresponding minimum weight $\nu^\circ_{\tt X}=\underset{{\textsc T}\in{\sf T}_{\tt X}^\circ}{\min}\Upsilon^{\textsc T}$ and the subset $\tilde{\sf T}_{\tt X}^\circ$ on which it is achieved.

\subsection{Splitting an undirected graph}

Similarly to the digraphs, tree divisibility is introduced by partition $\aleph$ and graph  $\Phi$ splitting by partition $\aleph$. Exactly. 

We will say that the graph $\Phi$ is {\it tree-divisible} by the partition $\aleph$, and the partition $\aleph$ itself is {\it tree-partition} with respect to the graph $\Phi$ if the subgraph $\Phi|_{\tt X}$ generated by any set ${\tt X}\in\aleph$ is connected (in this case the set ${\sf T}_{\tt X}\neq \emptyset$, which fully corresponds to the directed variant).  Otherwise, the graph splitting cannot be defined. {\it Splitting} $\Phi|\aleph=\Phi^\aleph$ (both parts of the equality are used to denote partition) of the graph $\Phi$ by partition $\aleph$ is a graph whose vertex set is the elements of partition $\aleph$: ${\tt V}\Phi^\aleph=\aleph$; an edge $({\tt X,Y})$, $\{\tt X,Y\}\subset\aleph$ belongs to the edge set of the graph $\Phi^\aleph$ if and only if ${\sf T}_{\tt XY}\neq \emptyset$. 

If the graph \Phi \ is reflexive (each vertex is incident to a loop), then the partition graph $\Phi^\aleph$ is considered reflexive by definition.

If $\Phi$ is a weighted graph, then the weights of the egges of splitting  are defined as follows:

\begin{equation}
{\textsc\phi}_{\tt XY}^\aleph=\nu_{\tt XY} -\nu_{\tt X} \ , \ \ {\sf T}_{\tt XY}\neq\emptyset \ .
\label{weip}
\end{equation}
If the set ${\sf T}_{\tt XY}=\emptyset$, then the edge ({\tt X,Y}) is not in the splitting graph.

It would seem that the definition of weights (\ref{weip}), introduced by analogy with directed graphs, can lead to an asymmetric weight matrix. But it is easy to verify that this is not the case. Indeed, it is obvious that

\begin{equation}
\nu_{\tt XY}=\nu_{\tt X} + \min_ {i\in{\tt X},  j\in{\tt Y}} {\textsc \phi}_{ij} \ .
\label{xy}
\end{equation}
Thus, the definition of weights (\ref{weip}) becomes a natural expression
 
\begin{equation}
{\textsc\phi}_{\tt XY}^\aleph= \min_ {i\in{\tt X},  j\in{\tt Y}} {\textsc \phi}_{ij} \  .
\label{weip'} 
\end{equation}
 
For a reflexive weighted graph $\Phi$, the weight of the loop ({\tt X,X}) is determined again based on the minimality

\begin{equation}
{\textsc\phi}_{\tt XX}^\aleph=\min_{i\in{\tt X}}{\textsc\phi}_{ii}  \ .
\label{loop}
\end{equation}

The convenience and naturalness of such a definition of loop weights is manifested in the case when the weight function of a directed graph (a directed graph of potential barriers) can be constructed from the weight function of a reflexive undirected graph (a potential graph) according to the rule \cite{V11}

\begin{equation}
\psi_{ij}=\textsc{\phi}_{ij}-\textsc{\phi}_{ii} \ . 
\end{equation}

\subsection{Viewing $\Phi$ as a digraph $\it\Phi$}

The undirected graph $\Phi$ can also be viewed as a directed graph, turning each edge into a pair of opposite arcs, thus creating a directed graph $\it\Phi$.

Between the original undirected graph $\Phi$ and the view of it as a directed graph $\it\Phi$, we have the following relations for the weights
\begin{equation}
\lambda_{\tt XY}(\it\Phi)=\nu_{\tt XY} \ , \ \ \lambda_{\tt X}^\circ(\it\Phi)=\nu_{\tt X}^\circ \ , \ \ 
\lambda_{\tt X}^\bullet(\it\Phi)=\lambda_{\tt X}^{\bullet x}(\it\Phi) =\nu_{\tt X} \ , \ \ x\in{\tt X} \ .
\end{equation}

Thus, for $\it\Phi$ and its spanning forests $F$, the Propositions and Theorems proved above are valid with $\it\Psi$ replaced by $\it\Phi$. But due to the symmetry of the weight function, there is an important feature. Let us consider it.

Propositions 1-6 remain the same, but Theorems 1 and 2 are strengthened. Now, for any ${\tt X}\in\aleph$, obviously $ \lambda_{\tt X}^\bullet(\it\Phi)=\lambda_{\tt X}^{\bullet x}(\it\Phi)$, where $x$ is an arbitrary vertex from ${\tt X}$. This means that in (\ref{ll}) an equality to zero is formed, and the inequality in (\ref{vv}) turns into an equality. In addition, for any spanning subgraph, all elements of the partition $\aleph$ are labeled, so the corresponding indexing disappears.

It is possible to make a complete reformulation of the proved Propositions and Theorems in terms of the undirected graph $\Phi$. We will not do this. We will limit ourselves to only reformulating the theorems.

{\bf Theorem 1'.} {\it
Let ${\textsc F}'\in{\sf F}^{k}(\Phi|\aleph)$ and ${\textsc F}$ be some principal of it. Then the forest $\textsc F$ is a minimal principal if and only if

\begin{equation}
\Upsilon^{\textsc F}_{\tt X}= \nu_{\tt X}, \ {\tt X}\in\aleph ; \ \  \mathrm{f}_{xy} =\nu_{\tt XY} ,  \ {\tt (X,Y)}\in{\tt E}{\textsc F}' .  
\end{equation}
Here 
$(x,y)$ is the only edge of the forest $\textsc F$ such that $x\in{\tt X}$ and $y\in{\tt Y}$.}

{\bf Theorem 2'.} {\it Let ${\textsc F}'\in{\sf F}^{k}(\Phi|\aleph)$ and ${\textsc F}$ be its minimal principal, $({\tt X,Y})\in{\tt E}{\textsc F}'$, 
and let $(x,y)$ be the unique edge of the forest ${\textsc F}$ such that $x\in{\tt X}$ and $y\in{\tt Y}$. Then

\begin{equation}
\mathrm f'_{\tt XY}=\textsc{\phi}^\aleph_{\tt XY}= \textsc{\phi}_{xy} =\mathrm f^\aleph_{\tt XY} \ . 
\label{ffp}
\end{equation}}
In particular, for an undirected graph $\Phi$, every forest 
$\textsc{F}'\in{\cal F}^k(\Phi|\aleph)$ can be identified with a 
splitting $\textsc{F}^\aleph$ of any of its minimal principals ${\textsc F}$.

{\bf Theorem 3'.} {\it 
Let $\textsc{F}'\in{\sf F}^{k}(\Phi|\aleph)$ and ${\textsc F}$ be its minimal principal. Then ${\textsc F}\in\tilde{\sf F}^{*k}$ if and only if  $\textsc{F}'\in\tilde{\sf F}^{k}(\Phi|\aleph)$. }

Given the expression (\ref{weip'}), the weights in (\ref{ffp}) look maximally natural and consistent with the weights of the arcs of the forests from $\tilde{\sf F}^{*k}$ and $\tilde{\sf F}^{k}(\Phi|\aleph)$ and the weights of the arcs of the original graph $\Phi$.

\section{Bottom line} 
\subsection{Divisibility problem}

It would seem that, by virtue of Theorems 2',3', undirected graphs do not have all the problems found in directed graphs. But, alas, this is not so. There remains one problem common to graphs and directed graphs. Let us voice it in terms of directed graphs.

As already pointed out, the forests $F\in \tilde{\cal F}^{*k}$ are not at all minimal in the sense of the original definition of minimality --- they are minimal among divisibles.

If we consider minimal forests $F\in\tilde{\cal F}^k$, then there is no reason to expect that $F$ will be tree-divisible. It is possible that none of the minimal forests is tree-divisible. At the same time, the set of forests ${\cal F}^k(\it\Psi^\aleph)$ is well defined and not empty, and forests with minimal weight can be identified among them. 

We "bypassed" this problem by first introducing a split of the original graph $\it\Psi$, and then began to consider minimal principals for its spanning subgraphs. But the issue still remains. In this case, we don't even have to focus on forests that are minimal in one sense or another. Let $F$ be a valid candidate to be divisible (by Proposition 2), i.e. $F\in{\cal F}^l$, $l\leq |\aleph|$. This forest may well not be divisible.

This serious circumstance, which does not allow an arbitrary partition $\aleph$ to be considered as some natural partition, can be looked at more generally. Let there be a coarser partition $\aleph'$, in the sense that $\aleph'$ is a subpartition of the partition $\aleph$. 
For a forest that is divisible by a partition $\aleph'$ (even if it is not the minimal among those divisible by that partition), there is no reason to expect that it will be divisible by $\aleph$.

This circumstance continues to be an obstacle both for the unweighted original directed graph $\it\Psi$ and even for the last "bastion" \ --- unweighted undirected graph $\Phi$. Moreover, the operation of removing inessential edges is useless for the latter. It has none.  That is, let $\Phi$ be an undirected unweighted graph divisible by a partition $\aleph$, $k\leq |\aleph|$. We create the graph $\Phi^*$, leaving in the original graph only those edges that occur in spanning forests from ${\sf F}^k(\Phi)$ divisible by this partition. But as is easy to see, all edges of the original unweighted graph are found in divisible forests. Therefore, $\Phi=\Phi^*$ and the above procedure does not change anything.

It is necessary to look for special types of partitions that are consistent with the weight function and the type of graph, as well as a special type of weight function so that the transition to sub-partitions does not cancel the divisibility property. And such partitions exist and there is a special type of weight function \cite{V11}, characteristic of many physical problems, when the work along a closed loop is equal to zero (the work against the field forces on a section of one arc $(i,j)$: $\it\Psi_{ij}-\Psi_{ji}$).

 \subsection{Which partitions to choose}

As we have seen, if the partition $ \aleph$ is arbitrary, then it certainly does not correspond to the structure of the weights of the original digraph $\it\Psi$ and, in particular, to its minimum spanning forests. Therefore, it is proposed to use as a partition the atoms of the algebras $\mathfrak{A}_k$ (these algebras, according to \cite[Theorem 1]{V7}, are nested), which are generated by minimal spanning forests consisting of $k$ trees. And here the question of whether the corresponding forests are divisible turns out to be extremely important. In \cite{V9} an important conjecture is proved that the forest $F\in\tilde{\cal F}^k\cup \tilde{\cal F}^{k-1}$, restricted to an element of the partition $\aleph_k$ (an atom of the algebra $\mathfrak{A}_k$), is a tree. Therefore, for such partitions there is no question of divisibility of minimal forests with the corresponding number of trees. All of them are divisible. Naturally, those algebras are chosen that differ from each other. The difference arises when in the convexity inequalities 
\cite{V6,V}

\begin{equation}
\varphi^{k-1}-\varphi^k\ge\varphi^k- \varphi^{k+1}, \ \ \varphi^k=\min_{F\in{\cal F}^k}{\it\Upsilon}^F,  
\label{convex}
\end{equation}
for different $k$ the strict inequality sign \cite{V7} appears.

\subsection{Where are these splits important and why entering forests}
 
In small diffusion (aka diffusion in strong fields up to a normalization factor), the labeled atoms ${\tt Z}\in\aleph_k^\bullet$ form sublimit distributions that are invariant on time scales \cite{V1,V2,VF} corresponding to different increments in (\ref{convex}).

If we are interested in individual trajectories of a random process, it is important to know where the trajectory can go. In this sense, analysis in classical outgoing forests is natural. All because they answer the question of where we can get from here and how quickly.  All the results obtained in this paper and in the works we refer to can be rewritten in terms of outgoing forests and trees. 

If, as usual in physics, we are interested in where we are mainly located, then it becomes important where the trajectories are attracted to. That is, it is important from where we can get here. And what are these places where everything converges. And this question naturally leads to the entering forests.

Returning to the random process, it turns out that entering forests are natural for the evolution of one-dimensional distributions.  The corresponding limit projectors with respect to the small diffusion parameter are extracted from the formulas for the coefficients of the eigenvectors of the diffusion generator (which corresponds to the Laplace matrix of the directed graph $\it\Psi$), which have the form of an unsigned sum over spanning forests \cite{V3}-\cite{V5}. In the asymptotics, only the terms corresponding to the minimal forests characteristic of the chosen time scale remain. And in this time scale corresponding to some $k$, the generator matrix turns into the Laplace matrix of the directed graph $\it\Psi|\aleph_k$.

\subsection{What is known and where to go}

In addition to the results obtained in this paper, in \cite{V12} a complete study of the properties of the restriction of minimal forests to atoms ${\tt X}\in\aleph_k$ of algebras of subsets $\mathfrak{A}_k$ and the influence of trees from $\tilde{\cal T}^\circ_{\tt X}$ and $ \tilde{\cal T}^\bullet_{\tt X}$ on the appearance and growth of forests from $\tilde{\cal F}^k$ as $k$ changes is carried out. Minimal trees, rather than forests, are constructed on the atoms of $\tt X$ by virtue of the proven hypothesis that the forest $F\in\tilde{\cal F}^k\cup \tilde{\cal F}^{k-1}$, restricted to an element of the partition $\aleph_k$, is a tree \cite{V9}. For the directed graph of potential barriers (\cite{V11}) (and, in particular, for undirected graphs) the situation is even "favorable". Any minimal forest from $\tilde{\cal F}^k$ turns out to be divisible not only by partition $\aleph_k$, but also by any partition $\aleph_l$ for $l>k$. It remains to construct splitting of the original digraph $\it\Psi$ by partitions $\aleph_k$. This is the subject of the next paper.



\centerline{Abstract}

\begin{center}{Splitting a graph by a given partition of the set of vertices based on the minimum weight of the induced trees}
\end{center}

\centerline{Buslov V.A.}

\parbox[t]{12cm}
{\small A method for considering a weighted directed graph with an accuracy of up to a given partition of the set of vertices is proposed. The resulting digraph (the splitting graph) does not contain arcs inside each partition element, and the arcs between the partition atoms are calculated in a special way taking into account the arcs of the original directed graph inside the atoms. This accounting is based on minimal trees defined on atoms. A study was made of what information about the original  digraph is preserved in its splitting.
}
\vspace{0.5cm}

St. Petersburg State University, Faculty of Physics, Department of Computational Physics

198504 St. Petersburg, Old Peterhof, st. Ulyanovskaya, 3

Email: abvabv@bk.ru, v.buslov@spbu.ru
\end{document}